\documentclass[12pt,reqno]{amsart}
\usepackage{amsmath,amssymb,amsfonts,amsthm}
\usepackage{geometry}
\usepackage{hyperref}
\usepackage{color}

\newtheorem{theorem}{Theorem}
\newtheorem{lemma}[theorem]{Lemma}
\newtheorem{proposition}[theorem]{Proposition}
\newtheorem{corollary}[theorem]{Corollary}
\newtheorem{remark}[theorem]{Remark}

\numberwithin{equation}{section}

\newcommand{\Hsp}{\mathcal{H}}
\newcommand{\B}{\mathcal{B}(\mathcal H)}
\newcommand{\norm}[1]{\left\lVert #1\right\rVert}
\newcommand{\ip}[2]{\left\langle #1,#2\right\rangle}
\newcommand{\R}{\mathbb{R}}
\newcommand{\C}{\mathbb{C}}
\newcommand{\wcart}[1]{\omega_{\mathrm{Cart},#1}}

\begin{document}

\title{The \(L^2\)-Cartesian Numerical Radii of Hilbert Space Operators}
\author{Wasim Audeh}
\address{Department of Mathematics, University of Petra, Amman, Jordan}
\email{waudeh@uop.edu.jo}

\author{Fuad Kittaneh}
\address{Department of Mathematics, The University of Jordan, Amman, Jordan}
\address{Department of Mathematics, Korea University, 02841, Seoul, South Korea}
\email{fkitt@ju.edu.jo}

\author{Hamid Reza Moradi}
\address{Department of Mathematics, Ma.C., Islamic Azad University, Mashhad, Iran}
\email{hrmoradi@mshdiau.ac.ir}

\author{Satyajit Sahoo}
\address{Department of Mathematics, School of Basic Sciences, Indian Institute of Technology Bhubaneswar, Bhubaneswar, Odisha 752050, India}
\email{satyajitsahoo2010@gmail.com}

\subjclass[2010]{Primary 47A12; Secondary 47A30, 47A63, 47A05, 47B15, 46C05}
\keywords{\(L^2\)-Cartesian numerical radius; numerical radius; Cartesian decomposition; sharp operator inequality}

\begin{abstract}
Let \(\mathcal{H}\) be a complex Hilbert space, and let \(T=H+iK\in\mathcal{B}(\mathcal{H})\) be the Cartesian decomposition of \(T\), where \(H=H^{*}\) and \(K=K^{*}\). We introduce the \(L^2\)-Cartesian numerical radius and study its basic properties. We compare it with the classical numerical radius \(\omega(T)\).
In particular, we establish the sharp estimate
\[
\frac{1}{\sqrt{2}}\omega(T)\le \wcart{2}(T)\le \omega(T),
\]
and establish lower and upper bounds in terms of the Cartesian parts \(H,K\).
We also give an equality characterization and explicit sharpness examples.
As a consequence, we obtain
\[
\frac{1}{4}\left\|T^{*}T+TT^{*}\right\|\le \wcart{2}(T)^2\le \omega(T)^2.
\]
More generally, we study the \(L^p\)-Cartesian numerical radius and prove the sharp comparison
\[
c_p\,\omega(T)\le \wcart{p}(T)\le \omega(T),\qquad 1\le p<\infty.
\]
Finally, we present a block-operator analogue of the classical refinement of the triangle inequality.
\end{abstract}

\maketitle

\section{Introduction}

We use standard notation: \(\mathcal B(\mathcal H)\) is the algebra of all bounded linear operators on \(\mathcal H\), equipped with the operator norm
\[
\|T\|=\sup_{\|x\|=1}\|Tx\|.
\]
We denote the zero operator by \(O\) and the identity operator by \(I\). The adjoint of \(T\) is denoted by \(T^*\), the inner product by \(\langle\cdot,\cdot\rangle\), and a vector \(x\in\mathcal H\) is unit if \(\|x\|=1\). An operator \(U\) is unitary if \(U^*U=UU^*=I\). Two operators \(A,B\) commute if \(AB=BA\), and \(T\) is normal if \(T^*T=TT^*\). For \(T\in\mathcal B(\mathcal H)\), its real and imaginary parts are
\[
H=\Re T=\frac{T+T^*}{2},\qquad K=\Im T=\frac{T-T^*}{2i}.
\]
Write the Cartesian decomposition of \(T\) as
\[
T=H+iK,\qquad H=H^{*},\qquad K=K^{*}.
\]
Recall that the numerical radius of \(T\in\mathcal B(\mathcal H)\) is
\[
\omega(T)=\sup_{\|x\|=1}|\langle Tx,x\rangle|.
\]

The numerical radius \(\omega(T)\) admits the Cartesian characterization
\[
\omega(T)=\sup_{\theta\in\mathbb{R}}\|\cos\theta H+\sin\theta K\|.
\]
For \(T=H+iK\in\B\) and \(\theta\in\mathbb R\), we write
\[
A_\theta=\cos\theta H+\sin\theta K.
\]
For \(1\le p<\infty\), the \emph{\(L^p\)-Cartesian numerical radius} of \(T\) is defined by
\begin{equation}\label{eq:def-omega-p}
\wcart{p}(T)=\left(\frac{1}{2\pi}\int_0^{2\pi}\|A_\theta\|^p\,d\theta\right)^{1/p},
\end{equation}
and we set \(\wcart{\infty}(T)=\omega(T)\). The case \(p=2\) is the \(L^2\)-Cartesian numerical radius
\[
\wcart{2}(T)=\left(\frac{1}{2\pi}\int_0^{2\pi}\|A_\theta\|^2\,d\theta\right)^{1/2}.
\]
We also use the constant
\begin{equation}\label{eq:def-cp}
c_p=\left(\frac{1}{2\pi}\int_0^{2\pi}|\cos\theta|^p\,d\theta\right)^{1/p}\in(0,1],
\end{equation}
so that \(c_2=1/\sqrt2\), \(c_1=2/\pi\), and \(\lim_{p\to\infty}c_p=1\).

For a unit vector \(x\in\Hsp\), we write
\[
h_x=\ip{Hx}{x},\qquad k_x=\ip{Kx}{x},
\]
and set
\[
V_T(x)=\norm{(H-h_xI)x}^2+\norm{(K-k_xI)x}^2.
\]
For \(T,S\in\B\), we denote the off-diagonal block operator acting on the direct sum Hilbert space \(\Hsp\oplus\Hsp\) by
\[
\mathbb M_{T,S}=
\begin{bmatrix}
O & T\\
S^* & O
\end{bmatrix}.
\]

This block representation, together with the Cartesian decomposition, has been a fundamental tool in studying numerical radius inequalities. For instance, Kittaneh, Moslehian, and Yamazaki \cite{KittanehMoslehianYamazaki2015} utilized it to obtain sharp refinements of the triangle inequality and several numerical radius bounds. Kittaneh \cite{Kittaneh2005} showed the classical estimate
\begin{equation}\label{int1}
\omega(T)^2\le \frac{1}{2}\|T^*T+TT^*\|=\|H^2+K^2\|,
\end{equation}
while further sharp bounds involving the Cartesian parts were provided in \cite{Kittaneh2015}. More recent works have continued this line; see, e.g., Nourbakhsh et al. \cite{NourbakhshHassaniOmidvarMoradi2024} for improved inequalities via Cartesian decomposition, Alomari et al. \cite{AlomariMoradiSababheh2026, AlomariSababhehCondeMoradi2024} for extensions of the Euclidean operator radius, and Barik et al. \cite{BarikBhuniaPaul2025} for a new notion of Cartesian decomposition and \(P\)-numerical radius bounds. Related refinements and off-diagonal block operator estimates appear in \cite{HirzallahKittanehShebrawi2011, SababhehCondeMoradi2023}; see also the survey by Dragomir \cite{Dragomir2007}.

The literature on numerical radius inequalities is extensive and growing rapidly. To see related works on the broader theory of numerical ranges and radii, see \cite{BonsallDuncan1971, GustafsonRao1997}. For new and updated work on numerical radius inequalities via Cartesian decomposition, see \cite{ElHaddadKittaneh2007, SababhehCondeMoradi2023}. For recent studies on numerical radius inequalities, see, for example, \cite{AudehAlLabadi2023, AudehAlLabadiAlNaimi2025, AudehMoradiSababheh2024a, AudehMoradiSababheh2025a, AudehMoradiSababheh2025b, AudehMoradiSababheh2026a, AudehMoradiSababheh2026b, AudehMoradiSababheh2026c, AudehMoradiSababheh2026d}. Related concepts such as the \(p\)-numerical radius have been established in \cite{BenmakhloufFrakisKittanehMennouni2025, BenmakhloufHirzallahKittaneh2021, BhuniaSahoo2025, FrakisKittaneh2025, FrakisKittanehSoltani2024a, FrakisKittanehSoltani2024b, FrakisKittanehSoltani2025, Sahoo2026}, the \(\rho\)-operator radius is studied in \cite{BhuniaMoslehianZamani2026, KittanehZamani2024}, and the \(q\)-numerical radius is presented in \cite{KittanehRashid2025, MoghaddamMirmostafaeeJanfada2024, SahooRout2026, StankovicKrsticDamnjanovic2025}. For weighted and \(C^{*}\)-algebraic settings, see \cite{ChenLiangLong2026}.

Motivated by these developments, we study the \(L^2\)-Cartesian numerical radius \(\wcart{2}\), which is the case \(p=2\) of \eqref{eq:def-omega-p}. This quantity is a norm on \(\mathcal B(\mathcal H)\), as shown in Theorem~\ref{thm:norm}. It is defined through an integral and is therefore amenable to Hilbert-space averaging, convexity, and spectral arguments.

One of the main results of the paper is the sharp comparison
\[
\frac{1}{\sqrt{2}}\omega(T)\le \wcart{2}(T)\le \omega(T),
\]
valid for every \(T\in\mathcal{B}(\mathcal{H})\). Both constants are sharp. The lower constant is attained by nonzero self-adjoint operators, while the upper constant is attained by operators whose Cartesian parts satisfy \(H^2=K^2=I\) and \(HK+KH=O\). This shows that \(\wcart{2}(\cdot)\) is equivalent to, but generally different from, the classical numerical radius. For related comparisons between different radii and norms, see \cite{Dragomir2006, HeydarbeygiSababhehMoradi2022, KittanehMoradiSababheh2024, MoradiSababheh2021, Yamazaki2007}.

We also obtain lower and upper bounds for \(\wcart{2}(\cdot)\) in terms of the Cartesian parts. For instance,
\[
\wcart{2}(T)^2\ge \frac{1}{2}\|H^2+K^2\|=\frac{1}{4}\|T^*T+TT^*\|.
\]
Moreover, we clarify a vector-dependent reformulation of this lower bound. For other lower bounds involving the real and imaginary parts, see \cite{NourbakhshHassaniOmidvarMoradi2024}. We establish the sharp two-sided estimate
\[
\|T+S\|\le 2\sqrt{2}\,\wcart{2}(\mathbb M_{T,S})
\le \sqrt{2}\bigl(\|T\|+\|S\|\bigr),
\]
where both constants are sharp. This result is the \(L^2\)-Cartesian numerical radius analogue of the classical refinement of the triangle inequality. It should not be viewed as a refinement of the ordinary triangle inequality, since its right-hand side carries the factor \(\sqrt{2}\). For the classical numerical radius version of this refinement, see \cite[Theorem 2.3]{KittanehMoslehianYamazaki2015}.

Finally, we show in Section~\ref{sec:Lp} that the sharp comparison above admits a natural one-parameter extension: for every \(1\le p<\infty\), the \(L^p\)-Cartesian numerical radius \(\wcart{p}(T)\) defined in \eqref{eq:def-omega-p} satisfies
\[
c_p\,\omega(T)\le \wcart{p}(T)\le \omega(T),
\]
where \(c_p\) is defined in \eqref{eq:def-cp}. In this sense, the \(L^2\)-Cartesian numerical radius studied in Section~\ref{s2} is the basic member of a unified family that interpolates between an \(L^1\)-average and the classical numerical radius.

\section{The \(L^2\)-Cartesian numerical radius}\label{s2}

We start this section with some basic properties of \(\wcart{2}(\cdot)\).

\begin{theorem}\label{thm:norm}
The functional \(T\mapsto \wcart{2}(T)\) from \(\B\) to \([0,\infty)\) is a norm. More precisely, it enjoys the following properties:
\begin{itemize}
\item[(i)] \(\wcart{2}(T)\geq0\), \(\wcart{2}(T)=0\iff T=O\).
\item[(ii)] \(\wcart{2}(\lambda T)=|\lambda|\wcart{2}(T)\), \(\lambda\in\C\).
\item[(iii)] \(\wcart{2}(S+T)\leq\wcart{2}(S)+\wcart{2}(T)\).
\item[(iv)] \(\wcart{2}(U^*TU)=\wcart{2}(T)\) for every unitary operator \(U\).
\item[(v)] \(\wcart{2}(e^{i\phi}T)=\wcart{2}(T)\) and \(\wcart{2}(T^*)=\wcart{2}(T)\) for every \(\phi\in\R\).
\end{itemize}
\end{theorem}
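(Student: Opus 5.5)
Throughout, write \(A_\theta(T)=\cos\theta\,\Re T+\sin\theta\,\Im T\) to record the dependence on \(T\). The key observation is that \(A_\theta(T)=\Re(e^{-i\theta}T)\): since \(e^{-i\theta}T=(\cos\theta-i\sin\theta)(H+iK)\), its real part is \(\cos\theta H+\sin\theta K\). Also, \(T\mapsto A_\theta(T)\) is real-linear. Hence \(\wcart{2}(T)\) is the \(L^2([0,2\pi],d\theta/2\pi)\)-norm of the function \(f_T(\theta)=\|A_\theta(T)\|\). This function is continuous, since \(|\,\|A_\theta\|-\|A_{\theta'}\|\,|\le(|\cos\theta-\cos\theta'|+|\sin\theta-\sin\theta'|)(\|H\|+\|K\|)\), so the integral is well defined and finite.

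For (i), nonnegativity is clear. If \(\wcart{2}(T)=0\), then the continuous nonnegative function \(f_T^2\) has zero integral, so \(f_T\equiv0\). Taking \(\theta=0\) and \(\theta=\pi/2\) gives \(H=K=O\), hence \(T=O\).

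For (iii), real-linearity gives \(A_\theta(S+T)=A_\theta(S)+A_\theta(T)\), so \(f_{S+T}\le f_S+f_T\) pointwise. Monotonicity of the \(L^2\) norm and Minkowski's inequality then give \(\wcart{2}(S+T)\le\wcart{2}(S)+\wcart{2}(T)\).

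For (v), \(A_\theta(e^{i\phi}T)=\Re(e^{-i(\theta-\phi)}T)=A_{\theta-\phi}(T)\). By \(2\pi\)-periodicity and translation invariance of \(d\theta\), the integral is unchanged. For the adjoint, \(T^*=H-iK\), so \(A_\theta(T^*)=\cos\theta H-\sin\theta K=A_{-\theta}(T)\), and the substitution \(\theta\mapsto-\theta\) together with periodicity gives invariance.

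For (ii), write \(\lambda=|\lambda|e^{i\phi}\). Real homogeneity for \(r\ge0\) is immediate, since \(A_\theta(rT)=rA_\theta(T)\); combining it with the first part of (v) yields (ii).

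For (iv), \(\Re(U^*TU)=U^*HU\) and \(\Im(U^*TU)=U^*KU\), so \(A_\theta(U^*TU)=U^*A_\theta(T)U\), which has the same norm as \(A_\theta(T)\). The integrand is therefore unchanged.

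Properties (i)--(iii) together say \(\wcart{2}\) is a norm. The only step that needs any care is (i), and specifically the continuity of \(f_T\), which lets one pass from zero integral to vanishing at \(\theta=0\) and \(\theta=\pi/2\). Equivalently, one could argue that \(f_T\) vanishes almost everywhere and then pick two independent directions \(\theta_1,\theta_2\) where it vanishes, which again forces \(H=K=O\).
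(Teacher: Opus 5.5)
Your proof is correct and follows essentially the same route as the paper: continuity plus evaluation at $\theta=0,\pi/2$ for (i), the pointwise triangle inequality plus Minkowski for (iii), unitary conjugation of $A_\theta$ for (iv), and the substitutions $\theta\mapsto\theta-\phi$ and $\theta\mapsto-\theta$ for (ii) and (v). The only difference is organizational. Your identity $A_\theta(T)=\Re(e^{-i\theta}T)$ replaces the paper's explicit computation of the Cartesian parts of $\lambda T$, and you deduce (ii) from rotation invariance rather than deriving rotation invariance from (ii).
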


\begin{proof}
(i) Nonnegativity is immediate. Suppose \(\wcart{2}(T)=0\). The function
\[
\theta\longmapsto
\norm{\cos\theta H+\sin\theta K}^2
\]
is continuous and nonnegative, and its integral over \([0,2\pi]\) is zero. Hence, it must vanish identically. Evaluating at
\(\theta=0\) and \(\theta=\pi/2\) gives \(H=O\) and \(K=O\), hence, \(T=O\).

(ii) If \(\lambda=0\), then \(\lambda T=O\) and both sides are \(0\). Assume
\(\lambda\ne0\). Write \(\lambda=re^{i\phi}\), where \(r=|\lambda|>0\). The
Cartesian components of \(\lambda T\) are
\[
H_\lambda=r(\cos\phi\,H-\sin\phi\,K),
\qquad
K_\lambda=r(\sin\phi\,H+\cos\phi\,K).
\]
Therefore,
\[
\cos\theta H_\lambda+\sin\theta K_\lambda
=
r\bigl(\cos(\theta-\phi)H+\sin(\theta-\phi)K\bigr).
\]
After the change of variable \(u=\theta-\phi\),
\[
\wcart{2}(\lambda T)^2
=r^2\wcart{2}(T)^2,
\]
and hence,
\[
\wcart{2}(\lambda T)=|\lambda|\wcart{2}(T).
\]

(iii) If
\[
S=H_S+iK_S,\qquad T=H_T+iK_T,
\]
then
\[
\begin{aligned}
&\norm{\cos\theta(H_S+H_T)+
\sin\theta(K_S+K_T)}\\
&\leq
\norm{\cos\theta H_S+\sin\theta K_S}
+
\norm{\cos\theta H_T+\sin\theta K_T}.
\end{aligned}
\]
Let
\[
f_S(\theta)=\|\cos\theta H_S+\sin\theta K_S\|,\qquad
f_T(\theta)=\|\cos\theta H_T+\sin\theta K_T\|,
\]
and define \(f_{S+T}\) analogously by
\[
f_{S+T}(\theta)=\|\cos\theta(H_S+H_T)+\sin\theta(K_S+K_T)\|.
\]
Then the preceding inequality says
\[
f_{S+T}(\theta)\le f_S(\theta)+f_T(\theta)\qquad(\theta\in[0,2\pi]).
\]
Taking \(L^2\)-norms with respect to the normalized measure
\(d\theta/(2\pi)\) and applying Minkowski's inequality, we obtain
\[
\wcart{2}(S+T)
=\|f_{S+T}\|_{L^2}
\le \|f_S+f_T\|_{L^2}
\le \|f_S\|_{L^2}+\|f_T\|_{L^2}
=\wcart{2}(S)+\wcart{2}(T).
\]

(iv) The Cartesian decomposition of \(U^*TU\) is
\[
U^*TU=U^*HU+iU^*KU.
\]
Thus,
\[
\begin{aligned}
\wcart{2}(U^*TU)^2
&=
\frac1{2\pi}\int_0^{2\pi}
\norm{U^*(\cos\theta H+\sin\theta K)U}^2\,d\theta\\
&=
\frac1{2\pi}\int_0^{2\pi}
\norm{\cos\theta H+\sin\theta K}^2\,d\theta\\
&=\wcart{2}(T)^2.
\end{aligned}
\]

(v) The first assertion follows from the homogeneity proof above with
\(|\lambda|=1\). For the second, the Cartesian decomposition of \(T^*\) is
\(H-iK\), and hence,
\[
\wcart{2}(T^*)^2
=
\frac1{2\pi}\int_0^{2\pi}
\norm{\cos\theta H-\sin\theta K}^2\,d\theta.
\]
The substitution \(\theta\mapsto-\theta\) proves the assertion.
\end{proof}

The following two propositions describe cases in which \(\wcart{2}\) is explicitly computable.

\begin{proposition}\label{prop:selfadjoint}
Let \(T\in\B\). If \(T=T^*\), then
\[
\wcart{2}(T)=\frac1{\sqrt2}\norm{T} =\frac1{\sqrt2}\omega(T).
\]
\end{proposition}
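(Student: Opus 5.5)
If \(T=T^*\), the Cartesian decomposition of \(T\) is simply \(H=T\), \(K=O\). The plan is to substitute this directly into the definition \eqref{eq:def-omega-p} with \(p=2\) and then reduce everything to the elementary integral of \(\cos^2\theta\).

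First, for every \(\theta\) we have \(A_\theta=\cos\theta\,T\), so \(\|A_\theta\|^2=\cos^2\theta\,\|T\|^2\). Integrating gives
\[
\wcart{2}(T)^2=\|T\|^2\cdot\frac{1}{2\pi}\int_0^{2\pi}\cos^2\theta\,d\theta=\frac12\|T\|^2,
\]
using \(\cos^2\theta=\tfrac12(1+\cos2\theta)\). Taking square roots yields \(\wcart{2}(T)=\frac1{\sqrt2}\|T\|\). Equivalently, this is the constant \(c_2=1/\sqrt2\) from \eqref{eq:def-cp}.

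Second, it remains to identify \(\|T\|\) with \(\omega(T)\) for self-adjoint \(T\). This is the classical fact that \(\omega(T)=\|T\|\) for normal, in particular self-adjoint, operators. It can also be read off from the Cartesian characterization \(\omega(T)=\sup_\theta\|\cos\theta H+\sin\theta K\|=\sup_\theta|\cos\theta|\,\|T\|=\|T\|\), which the paper recalls in the introduction; I would cite that characterization so that the argument stays self-contained within the paper.

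There is no real obstacle here. The only point needing care is the correct identification of the Cartesian parts \(H=\Re T=T\) and \(K=\Im T=O\), which is immediate from \(T^*=T\).
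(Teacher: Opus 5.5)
Your proposal is correct and follows the paper's proof: with \(K=O\) you get \(\|A_\theta\|^2=\cos^2\theta\,\|T\|^2\), average to \(\frac12\|T\|^2\), and then use \(\omega(T)=\|T\|\) for self-adjoint \(T\). Your remark that \(\omega(T)=\|T\|\) also follows from the Cartesian characterization is a valid alternative to quoting the classical fact.
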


\begin{proof}
In this case \(K=O\), so
\[
\wcart{2}(T)^2
=
\frac1{2\pi}\int_0^{2\pi}
\cos^2\theta\,d\theta\,\norm{T}^2
=
\frac12\norm{T}^2.
\]
Since \(\omega(T)=\norm{T}\) for self-adjoint operators, the result follows.
\end{proof}

\begin{proposition}\label{prop:equalitycase}
Let \(T=H+iK\in\B\). If
\[
H^2=K^2
\qquad\text{and}\qquad
HK+KH=O,
\]
then
\[
\omega(T)=\wcart{2}(T)=\sqrt{\frac12\|H^2+K^2\|}.
\]
\end{proposition}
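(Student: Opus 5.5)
The key observation is that under the hypotheses \(H^2=K^2\) and \(HK+KH=O\), the square of \(A_\theta\) does not depend on \(\theta\). I will expand it directly:
\[
A_\theta^2=\cos^2\theta\,H^2+\sin^2\theta\,K^2+\cos\theta\sin\theta\,(HK+KH).
\]
The last term vanishes by assumption. Since \(H^2=K^2\), this gives \(A_\theta^2=H^2=\tfrac12(H^2+K^2)\) for every \(\theta\in\R\).

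Because \(A_\theta\) is self-adjoint, the \(C^*\)-identity gives \(\|A_\theta\|^2=\|A_\theta^2\|\). Hence
\[
\|A_\theta\|^2=\tfrac12\|H^2+K^2\|\qquad(\theta\in\R),
\]
so the function \(\theta\mapsto\|A_\theta\|\) is constant.

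Both quantities in the statement are determined by this function. By the Cartesian characterization recalled in the Introduction, \(\omega(T)=\sup_\theta\|A_\theta\|\). By definition, \(\wcart{2}(T)^2\) is the normalized average \(\frac1{2\pi}\int_0^{2\pi}\|A_\theta\|^2\,d\theta\). Since the integrand is constant, both the supremum and the average equal \(\sqrt{\tfrac12\|H^2+K^2\|}\), which gives the asserted chain of equalities.

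No step here is a real obstacle. The only points needing care are using self-adjointness of \(A_\theta\) to pass from \(\|A_\theta^2\|\) to \(\|A_\theta\|^2\), and writing \(H^2\) as \(\tfrac12(H^2+K^2)\) to match the form of the statement.
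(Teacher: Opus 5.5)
Your proposal is correct and follows essentially the same route as the paper: show that \(A_\theta^2=\tfrac12(H^2+K^2)\) for every \(\theta\), use self-adjointness to get \(\|A_\theta\|^2=\|A_\theta^2\|\), and conclude that the supremum defining \(\omega(T)\) and the average defining \(\wcart{2}(T)^2\) both equal \(\tfrac12\|H^2+K^2\|\). The paper phrases the vanishing in terms of \(H^2-K^2=O\) and \(HK+KH=O\), whereas you use \(\cos^2\theta+\sin^2\theta=1\) directly; this is only a cosmetic difference.
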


\begin{proof}
Under the assumptions, we have
\[
H^2-K^2=O
\]
and
\[
HK+KH=O.
\]
Therefore,
\[
A_\theta^2
=
\frac12(H^2+K^2)
\]
for every \(\theta\). Since \(A_\theta\) is self-adjoint,
\[
\|A_\theta\|^2=\|A_\theta^2\|.
\]
Hence,
\[
\norm{A_\theta}^2
=
\frac12\norm{H^2+K^2}
\]
for every \(\theta\). Thus,
\[
\omega(T)^2
=
\sup_\theta\norm{A_\theta}^2
=
\frac12\norm{H^2+K^2}.
\]
Also,
\[
\wcart{2}(T)^2
=
\frac{1}{2\pi}
\int_0^{2\pi}\norm{A_\theta}^2\,d\theta
=
\frac12\norm{H^2+K^2}.
\]
Therefore,
\[
\omega(T)^2=\wcart{2}(T)^2.
\]
Taking square roots proves the result.
\end{proof}

We now present one of the main results of this paper.

\begin{theorem}\label{thm:comparison}
Let \(T\in\B\). Then
\[
\frac1{\sqrt2}\omega(T) \leq \wcart{2}(T) \leq \omega(T).
\]
Both constants are sharp.
\end{theorem}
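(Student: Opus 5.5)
The upper bound is immediate from the Cartesian characterization \(\omega(T)=\sup_{\theta}\|A_\theta\|\) recalled in the introduction. Since \(\|A_\theta\|^2\le\omega(T)^2\) for every \(\theta\), averaging over \([0,2\pi]\) with respect to \(d\theta/(2\pi)\) gives \(\wcart{2}(T)^2\le\omega(T)^2\).

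For the lower bound, I would fix an angle \(\theta_0\) and compare \(\|A_\theta\|\) with \(\|A_{\theta_0}\|\). The key identity is the rotation formula
\[
A_\theta=\cos(\theta-\theta_0)A_{\theta_0}+\sin(\theta-\theta_0)A_{\theta_0+\pi/2}.
\]
It follows by expanding \(\cos\theta=\cos(\theta-\theta_0)\cos\theta_0-\sin(\theta-\theta_0)\sin\theta_0\), and similarly for \(\sin\theta\). Now take a unit vector \(x\) with \(|\langle A_{\theta_0}x,x\rangle|\) close to \(\|A_{\theta_0}\|\). Since \(A_{\theta_0}\) is self-adjoint, \(a:=\langle A_{\theta_0}x,x\rangle\) is real. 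Set \(b:=\langle A_{\theta_0+\pi/2}x,x\rangle\), which is also real. Then
\[
\|A_\theta\|\ge|\langle A_\theta x,x\rangle|=|a\cos(\theta-\theta_0)+b\sin(\theta-\theta_0)|.
\]
Squaring and averaging, the cross term \(\cos\sin\) integrates to zero, so
\[
\wcart{2}(T)^2\ge \tfrac12(a^2+b^2)\ge\tfrac12 a^2.
\]
Letting \(a^2\to\|A_{\theta_0}\|^2\) and then taking the supremum over \(\theta_0\), the characterization of \(\omega\) yields \(\wcart{2}(T)^2\ge\frac12\omega(T)^2\). Equivalently, one may simply note \(\|A_\theta\|\ge|\langle A_\theta x,x\rangle|=|\Re(e^{-i\theta}\langle Tx,x\rangle)|\) and choose \(x\) nearly attaining \(\omega(T)\). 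Then the integrand is at least \(\bigl(\omega(T)-\varepsilon\bigr)^2\cos^2(\theta-\alpha)\) for some phase \(\alpha\), whose average is \(\frac12\bigl(\omega(T)-\varepsilon\bigr)^2\). I would present this vector form, since it avoids the rotation identity altogether.

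The only step requiring some care is this pointwise-in-\(x\) inequality followed by integration. The averaging must be done for a \emph{fixed} \(x\) before any supremum is taken. This is also what makes the bound lose exactly the factor \(\frac12=\frac{1}{2\pi}\int_0^{2\pi}\cos^2\).

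Sharpness of \(1/\sqrt2\) follows from Proposition~\ref{prop:selfadjoint}: any nonzero self-adjoint \(T\) gives equality on the left. Sharpness of the upper constant \(1\) follows from Proposition~\ref{prop:equalitycase}. For a concrete case, take \(\Hsp=\C^2\), \(H=\begin{bmatrix}0&1\\1&0\end{bmatrix}\) and \(K=\begin{bmatrix}1&0\\0&-1\end{bmatrix}\) (Pauli matrices). These satisfy \(H^2=K^2=I\) and \(HK+KH=O\), so \(\omega(T)=\wcart{2}(T)=1\) for \(T=H+iK\). On a general \(\Hsp\), one tensors this example with \(I\), or takes a direct sum with \(O\).
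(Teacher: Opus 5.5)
Your proposal is correct, and the vector form you say you would present is the paper's proof: fix a unit vector \(x\), use \(\|A_\theta\|\ge|a\cos\theta+b\sin\theta|\) with \(a=\langle Hx,x\rangle\), \(b=\langle Kx,x\rangle\), average to get \(\frac12|\langle Tx,x\rangle|^2\), take the supremum over \(x\), and use the same self-adjoint and Pauli-matrix examples for sharpness. Your alternative rotation-identity route, which gives \(\wcart{2}(T)^2\ge\frac12\|A_{\theta_0}\|^2\) for each \(\theta_0\), is also valid; it needs only the Cartesian characterization of \(\omega\) and the fact that norm equals numerical radius for self-adjoint operators.
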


\begin{proof}
By the Cartesian characterization of \(\omega(T)\), we have
\[
\norm{\cos\theta H+\sin\theta K}\leq \omega(T)
\]
for every \(\theta\). Hence,
\[
\wcart{2}(T)^2
\leq
\frac1{2\pi}\int_0^{2\pi}\omega(T)^2\,d\theta
=\omega(T)^2.
\]
Therefore, \(\wcart{2}(T)\leq \omega(T)\).

For the lower bound, let \(x\in\mathcal H\) be a unit vector and put
\[
a=\ip{Hx}{x},\qquad b=\ip{Kx}{x}.
\]
Then
\[
\ip{Tx}{x}=a+ib.
\]
Consequently,
\[
\begin{aligned}
\wcart{2}(T)^2
&\geq
\frac1{2\pi}\int_0^{2\pi}
|a\cos\theta+b\sin\theta|^2\,d\theta\\
&=
\frac12(a^2+b^2)\\
&=\frac12|\ip{Tx}{x}|^2.
\end{aligned}
\]
Taking the supremum over all unit vectors \(x\) yields
\[
\wcart{2}(T)^2\geq\frac12\omega(T)^2,
\]
and hence,
\[
\wcart{2}(T)\geq\frac1{\sqrt2}\omega(T).
\]

To see sharpness of the lower constant, let \(T=T^*\neq O\). Then \(K=O\), and according to Proposition~\ref{prop:selfadjoint},
\[
\wcart{2}(T)=\frac1{\sqrt2}\omega(T).
\]
The upper constant \(1\) is also sharp. Let
\[
H=
\begin{bmatrix}0&1\\1&0\end{bmatrix},
\qquad
K=
\begin{bmatrix}1&0\\0&-1\end{bmatrix},
\qquad T=H+iK.
\]
Then \(H^2=K^2=I\) and \(HK+KH=0\). Hence, Proposition~\ref{prop:equalitycase} gives
\[
\wcart{2}(T)=\omega(T)=1.
\]
\end{proof}

\begin{proposition}
\label{prop:equalitychar}
Let \(T=H+iK\in\B\). Then \(\wcart{2}(T)=\omega(T)\) if and only if the function
\[
\theta\longmapsto \|\cos\theta H+\sin\theta K\|
\]
is constant on \([0,2\pi]\).
\end{proposition}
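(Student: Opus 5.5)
The plan is to argue through the continuous function
\[
f(\theta)=\|A_\theta\|=\|\cos\theta H+\sin\theta K\|,
\]
together with the Cartesian characterization \(\omega(T)=\sup_\theta f(\theta)\). We first note that \(f\) is continuous; in fact it is Lipschitz, since
\[
|f(\theta)-f(\theta')|\le \|A_\theta-A_{\theta'}\|\le |\cos\theta-\cos\theta'|\,\|H\|+|\sin\theta-\sin\theta'|\,\|K\|.
\]
Because \(f\) is continuous on the compact interval \([0,2\pi]\), the supremum is attained, so \(\omega(T)=\max_\theta f(\theta)\).

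For the \emph{if} direction, suppose \(f\equiv c\) on \([0,2\pi]\). Then \(\omega(T)=\sup_\theta f(\theta)=c\), and
\[
\wcart{2}(T)^2=\frac1{2\pi}\int_0^{2\pi}c^2\,d\theta=c^2 .
\]
Taking square roots gives \(\wcart{2}(T)=c=\omega(T)\).

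For the \emph{only if} direction, suppose \(\wcart{2}(T)=\omega(T)\). We rewrite this as
\[
\frac1{2\pi}\int_0^{2\pi}\bigl(\omega(T)^2-f(\theta)^2\bigr)\,d\theta=0 .
\]
The integrand \(g(\theta)=\omega(T)^2-f(\theta)^2\) is nonnegative, by the pointwise bound \(f(\theta)\le\omega(T)\) used in the proof of Theorem~\ref{thm:comparison}. It is also continuous, since \(f\) is. A nonnegative continuous function with zero integral must vanish identically: if \(g(\theta_0)>0\) at some point, then \(g\) stays above \(g(\theta_0)/2\) on a small interval around \(\theta_0\), which forces the integral to be positive. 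Hence \(f(\theta)=\omega(T)\) for every \(\theta\in[0,2\pi]\), so \(f\) is constant.

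There is no real obstacle in this argument. The only point requiring care is the continuity of \(f\), which is what lets us pass from ``zero integral'' to ``vanishes everywhere'' rather than merely ``vanishes almost everywhere''. The Lipschitz estimate above settles it. The same reasoning was already used in part (i) of Theorem~\ref{thm:norm}.
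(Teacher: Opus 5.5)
Your proof is correct and follows the same route as the paper's: both use the continuity of $f(\theta)=\|A_\theta\|$ and the fact that the average of $f^2$ can equal its maximum $\omega(T)^2$ only if $f^2$ is constant. You simply spell out what the paper leaves implicit, namely the continuity of $f$ (via your Lipschitz bound) and the argument that a nonnegative continuous function with zero integral vanishes identically.
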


\begin{proof}
Put \(f(\theta)=\|\cos\theta H+\sin\theta K\|\). Then \(f\) is continuous and nonnegative,
\[
\omega(T)=\sup_{\theta\in[0,2\pi]} f(\theta),
\qquad
\wcart{2}(T)^2=\frac1{2\pi}\int_0^{2\pi} f(\theta)^2\,d\theta.
\]
If \(f\) is constant, then clearly \(\wcart{2}(T)=\omega(T)\). Conversely, if
\(\wcart{2}(T)=\omega(T)\), then the average of \(f^2\) equals its maximum. By continuity, \(f^2\) must be constant, hence \(f\) is constant.
\end{proof}

The following result includes a lower bound for \(\wcart{2}(\cdot)\).

\begin{theorem}\label{thm:lower}
Let \(T=H+iK\in\B\). Then
\[
\wcart{2}(T)^2 \geq \frac12\norm{H^2+K^2}.
\]
Equivalently,
\[
\wcart{2}(T)^2 \geq \frac14\norm{T^*T+TT^*}.
\]
\end{theorem}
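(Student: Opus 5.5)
The plan is to pass from norms of \(A_\theta\) to a single fixed unit vector and only then average over \(\theta\), exactly as in the lower-bound argument of Theorem~\ref{thm:comparison}. The difference is that we use \(\|A_\theta x\|^2\) rather than \(|\ip{A_\theta x}{x}|^2\).

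Fix a unit vector \(x\in\Hsp\). For every \(\theta\) we have \(\|A_\theta\|^2\ge \|A_\theta x\|^2\). We expand
\[
\|A_\theta x\|^2=\cos^2\theta\,\|Hx\|^2+\sin^2\theta\,\|Kx\|^2+2\sin\theta\cos\theta\,\Re\ip{Hx}{Kx}.
\]
Integrating against \(d\theta/(2\pi)\), the cross term vanishes because \(\int_0^{2\pi}\sin\theta\cos\theta\,d\theta=0\). Each of \(\cos^2\theta\) and \(\sin^2\theta\) averages to \(1/2\). Hence
\[
\wcart{2}(T)^2\ge \frac12\bigl(\|Hx\|^2+\|Kx\|^2\bigr)=\frac12\ip{(H^2+K^2)x}{x}.
\]

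Next we take the supremum over unit vectors \(x\). Since \(H^2+K^2\) is positive, hence self-adjoint, \(\sup_{\|x\|=1}\ip{(H^2+K^2)x}{x}=\|H^2+K^2\|\). This gives the first inequality.

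For the equivalent form, a direct computation from \(H=(T+T^*)/2\) and \(K=(T-T^*)/(2i)\) gives \(H^2+K^2=\tfrac12(T^*T+TT^*)\), which is the identity already recorded in \eqref{int1}. Substituting it yields \(\wcart{2}(T)^2\ge\frac14\|T^*T+TT^*\|\).

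There is no real obstacle here. The only point needing care is the order of operations: we must bound \(\|A_\theta\|\) below by \(\|A_\theta x\|\) for a fixed \(x\) before integrating, and take the supremum over \(x\) only afterwards. This works because the average of a supremum dominates the supremum of the averages.
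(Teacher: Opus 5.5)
Your proof is correct, but it follows a different route from the paper's. The paper argues at the operator level. It writes $\|A_\theta\|^2=\|A_\theta^2\|$ and pulls the norm outside the integral by convexity, $\frac1{2\pi}\int_0^{2\pi}\|A_\theta^2\|\,d\theta\ge\bigl\|\frac1{2\pi}\int_0^{2\pi}A_\theta^2\,d\theta\bigr\|$. It then computes the operator average $\frac1{2\pi}\int_0^{2\pi}A_\theta^2\,d\theta=\frac12(H^2+K^2)$ directly.

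You instead fix a unit vector before integrating, so everything reduces to scalar integrals. You recover the norm only at the end, via $\sup_{\|x\|=1}\langle (H^2+K^2)x,x\rangle=\|H^2+K^2\|$ for the positive operator $H^2+K^2$.

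Your version buys two things. It avoids operator-valued integration and the C$^*$-identity entirely. It also produces the vector-wise bound $\wcart{2}(T)^2\ge\frac12\langle(H^2+K^2)x,x\rangle$ as an intermediate step. This is exactly the content of Lemma~\ref{lem:variance}, which the paper derives separately later by splitting $\|A_\theta x\|^2$ into $|\langle Tx,x\rangle|^2$ and $V_T(x)$ parts.

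The paper's formulation buys something else: it isolates the identity $\frac1{2\pi}\int_0^{2\pi}A_\theta^2\,d\theta=Q$ with $Q=\frac12(H^2+K^2)$. The paper reuses this in Theorem~\ref{thm:equality-compact} to characterize equality through a unit vector at which $Q$ attains its norm.

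The two arguments are really the same computation. For positive integrands, the norm of the average equals the supremum over unit $x$ of the average of $\langle A_\theta^2x,x\rangle=\|A_\theta x\|^2$.
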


\begin{proof}
Since \(A_\theta=A_\theta^*\),
\[
\norm{A_\theta}^2=\norm{A_\theta^2}.
\]
By convexity of the operator norm, we have
\[
\begin{aligned}
\wcart{2}(T)^2
&=
\frac1{2\pi}\int_0^{2\pi}\norm{A_\theta^2}\,d\theta\\
&\geq
\norm{
\frac1{2\pi}\int_0^{2\pi}A_\theta^2\,d\theta}.
\end{aligned}
\]
Now,
\[
A_\theta^2
=
\cos^2\theta H^2
+\sin^2\theta K^2
+\sin\theta\cos\theta(HK+KH).
\]
Using
\[
\frac1{2\pi}\int_0^{2\pi}\cos^2\theta\,d\theta
=
\frac12,
\quad
\frac1{2\pi}\int_0^{2\pi}\sin^2\theta\,d\theta
=
\frac12,
\]
and
\[
\frac1{2\pi}\int_0^{2\pi}\sin\theta\cos\theta\,d\theta=0,
\]
we obtain
\[
\frac1{2\pi}\int_0^{2\pi}A_\theta^2\,d\theta
=
\frac12(H^2+K^2).
\]
Thus,
\[
\wcart{2}(T)^2\geq\frac12\norm{H^2+K^2}.
\]
Since
\[
H^2+K^2
=
\frac{T^*T+TT^*}{2},
\]
the second inequality follows.
\end{proof}

\begin{corollary}
Let \(T\in\B\). Then
\[
\frac14\norm{T^*T+TT^*} \leq \wcart{2}(T)^2 \leq \omega(T)^2.
\]
\end{corollary}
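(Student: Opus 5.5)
This corollary is obtained by chaining two results already proved in this section, so the plan needs no new estimates. For the left-hand inequality I will invoke Theorem~\ref{thm:lower}, whose second form states exactly
\[
\wcart{2}(T)^2\ge \frac14\norm{T^*T+TT^*}.
\]
The only ingredient there was the identity \(H^2+K^2=\frac12(T^*T+TT^*)\). This follows by expanding \(H=\frac{T+T^*}{2}\) and \(K=\frac{T-T^*}{2i}\): the terms \(T^2\) and \((T^*)^2\) cancel when the two squares are added.

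For the right-hand inequality I will use the upper bound in Theorem~\ref{thm:comparison}, namely \(\wcart{2}(T)\le\omega(T)\). Both sides are nonnegative, so squaring preserves the inequality and gives \(\wcart{2}(T)^2\le\omega(T)^2\). Concatenating the two displays yields the stated two-sided estimate.

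No step is a genuine obstacle, since everything has already been established. The only things to check are that the normalization constant \(\tfrac14\) matches the second form of Theorem~\ref{thm:lower}, and that the squaring step is valid, which it is because both \(\wcart{2}(T)\) and \(\omega(T)\) are nonnegative.

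As a consistency check, I would remark that combining the corollary with \eqref{int1} gives
\[
\frac14\norm{T^*T+TT^*}\le \wcart{2}(T)^2\le\omega(T)^2\le \frac12\norm{T^*T+TT^*}.
\]
This chain is consistent: it recovers, through \(\wcart{2}\), the known bound \(\frac14\norm{T^*T+TT^*}\le\omega(T)^2\).
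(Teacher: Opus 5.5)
Your proposal is correct and matches the paper's implicit argument: the left inequality is the second form of Theorem~\ref{thm:lower}, and the right inequality is the square of the bound \(\wcart{2}(T)\le\omega(T)\) from Theorem~\ref{thm:comparison}. One small inaccuracy: the key step in Theorem~\ref{thm:lower} is the convexity estimate \(\frac1{2\pi}\int_0^{2\pi}\norm{A_\theta^2}\,d\theta\ge\norm{\frac1{2\pi}\int_0^{2\pi}A_\theta^2\,d\theta}\), whereas the identity \(H^2+K^2=\frac12(T^*T+TT^*)\) only converts between its two equivalent forms.
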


\begin{remark}
The lower bound is sharp for self-adjoint operators. Indeed, if \(T=T^*\), then \(K=O\), and Proposition~\ref{prop:selfadjoint} gives
\[
\wcart{2}(T)^2=\frac12\|T\|^2=\frac12\|H\|^2.
\]
\end{remark}

\begin{theorem}
\label{thm:equality-compact}
Let \(T\in\B\) be compact, and write its Cartesian decomposition as
\(T=H+iK\). Then the following statements are equivalent:
\begin{enumerate}
\item
\[
\wcart{2}(T)^2=\frac12\norm{H^2+K^2}.
\]
\item There exists a unit vector \(x\in\Hsp\) such that
\[
\norm{A_\theta x}=\norm{A_\theta}
\qquad\text{for every }\theta\in\R.
\]
That is, \(x\) is a common unit norming vector for the family \(\{A_\theta\}_{\theta\in\mathbb R}\).
\end{enumerate}
\end{theorem}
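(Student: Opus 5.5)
We follow the proof of Theorem~\ref{thm:lower}, where the only inequality used was
\[
\frac1{2\pi}\int_0^{2\pi}\norm{A_\theta^2}\,d\theta\ \ge\ \norm{\frac1{2\pi}\int_0^{2\pi}A_\theta^2\,d\theta}=\frac12\norm{H^2+K^2}.
\]
Statement (1) means exactly that this triangle-type inequality is an equality. Compactness is used to replace the norm of the averaged positive operator by a value at an attaining vector. Since each \(A_\theta^2\) is positive, \(\norm{A_\theta^2}=\sup_{\norm{y}=1}\ip{A_\theta^2y}{y}=\sup_{\norm y=1}\norm{A_\theta y}^2\). Likewise, for the positive compact operator \(P=\frac12(H^2+K^2)\) we have \(\norm P=\max_{\norm y=1}\ip{Py}{y}\). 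This maximum is attained: if \(P\neq O\), at a unit eigenvector for the largest eigenvalue by the spectral theorem for compact positive operators; if \(P=O\), then \(T=O\) and everything is trivial.

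\emph{(2)\(\Rightarrow\)(1).} Let \(x\) be a common norming vector. Then
\[
\wcart{2}(T)^2=\frac1{2\pi}\int_0^{2\pi}\norm{A_\theta x}^2\,d\theta=\ip{\Bigl(\frac1{2\pi}\int_0^{2\pi}A_\theta^2\,d\theta\Bigr)x}{x}=\ip{Px}{x}\le\norm P .
\]
Interchanging the inner product with the integral is justified because \(\theta\mapsto A_\theta^2\) is norm-continuous. Theorem~\ref{thm:lower} gives the reverse inequality, so (1) holds. This direction does not need compactness.

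\emph{(1)\(\Rightarrow\)(2).} Choose a unit vector \(x\) with \(\ip{Px}{x}=\norm P\), which exists by compactness. The same computation gives
\[
\frac1{2\pi}\int_0^{2\pi}\bigl(\norm{A_\theta}^2-\norm{A_\theta x}^2\bigr)\,d\theta=\wcart{2}(T)^2-\ip{Px}{x}=\wcart{2}(T)^2-\norm P=0
\]
by (1). The integrand is nonnegative and continuous in \(\theta\): \(\theta\mapsto A_\theta\) is norm-continuous, so both \(\norm{A_\theta}\) and \(\norm{A_\theta x}\) are continuous. Hence the integrand vanishes identically, so \(\norm{A_\theta x}=\norm{A_\theta}\) for all \(\theta\in[0,2\pi]\). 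By \(2\pi\)-periodicity this holds for all \(\theta\in\R\).

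The only delicate point is the existence of a vector attaining \(\norm P\), which is where compactness is needed. In a general Hilbert space \(\norm P\) need not be attained, and one would only get approximate common norming sequences. Everything else is continuity together with the observation that a nonnegative continuous function with zero integral vanishes.
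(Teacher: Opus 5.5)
Your proof is correct and follows essentially the same route as the paper. Both directions rest on the identity $\frac{1}{2\pi}\int_0^{2\pi}A_\theta^2\,d\theta=\frac12(H^2+K^2)$; compactness is used only to obtain a unit vector $x$ with $\langle Px,x\rangle=\|P\|$, and the nonnegative continuous integrand $\|A_\theta\|^2-\|A_\theta x\|^2$ with zero average is then forced to vanish identically, while your extra remarks (that (2)$\Rightarrow$(1) needs no compactness, and the continuity and periodicity justifications) are accurate.
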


\begin{proof}
Set \(Q=\frac12(H^2+K^2)\). Since \(T\) is compact, both \(H\) and \(K\)
are compact, and hence \(Q\) is a compact positive operator. Moreover,
\[
\frac1{2\pi}\int_0^{2\pi}A_\theta^2\,d\theta=Q,
\qquad
\wcart{2}(T)^2=\frac1{2\pi}\int_0^{2\pi}\norm{A_\theta^2}\,d\theta.
\]
Suppose first that \(\wcart{2}(T)^2=\norm Q\). Since \(Q\) is compact and
positive, there is a unit vector \(x\) such that \(\ip{Qx}{x}=\norm Q\). It follows that
\[
\begin{aligned}
0
&=\wcart{2}(T)^2-\ip{Qx}{x}\\
&=\frac1{2\pi}\int_0^{2\pi}
\bigl(\norm{A_\theta^2}-\ip{A_\theta^2x}{x}\bigr)\,d\theta.
\end{aligned}
\]
For every \(\theta\), the integrand is nonnegative because \(A_\theta^2\)
is positive and
\(\ip{A_\theta^2x}{x}\leq\norm{A_\theta^2}\). It is also continuous.
Thus, it vanishes for every \(\theta\). As \(A_\theta\) is self-adjoint,
\[
\norm{A_\theta^2}=\norm{A_\theta}^2,
\qquad
\ip{A_\theta^2x}{x}=\norm{A_\theta x}^2.
\]
Therefore, \(\norm{A_\theta x}=\norm{A_\theta}\) for every \(\theta\).

Conversely, suppose that a unit vector \(x\) satisfies this identity for
every \(\theta\). Then
\[
\begin{aligned}
\wcart{2}(T)^2
&=\frac1{2\pi}\int_0^{2\pi}\norm{A_\theta x}^2\,d\theta\\
&=\frac1{2\pi}\int_0^{2\pi}\ip{A_\theta^2x}{x}\,d\theta\\
&=\ip{Qx}{x}\leq\norm Q.
\end{aligned}
\]
The reverse inequality \(\norm Q\leq\wcart{2}(T)^2\) follows from
\[
\norm Q
=\left\|\frac1{2\pi}\int_0^{2\pi}A_\theta^2\,d\theta\right\|
\leq\frac1{2\pi}\int_0^{2\pi}\norm{A_\theta^2}\,d\theta
=\wcart{2}(T)^2.
\]
Hence, equality holds.
\end{proof}

\begin{remark}
The compactness assumption guarantees that \(Q\) attains its norm at a
unit vector. The theorem characterizes equality by the existence of a
common norming vector for all Cartesian rotations \(A_\theta\).
\end{remark}

The following observation, a pointwise form of the lower bound in Theorem~\ref{thm:lower}, is used in the sequel.

\begin{lemma}\label{lem:variance}
Let \(T=H+iK\in\B\) and let \(x\in\mathcal H\) be a unit vector. Then
\[
\wcart{2}(T)^2\ge \frac12|\ip{Tx}{x}|^2+\frac12 V_T(x)
=\frac12\ip{(H^2+K^2)x}{x}.
\]
\end{lemma}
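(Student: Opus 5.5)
The proof has two parts: a pointwise inequality of the kind used in Theorem~\ref{thm:lower}, and an algebraic identity for the right-hand side. Fix a unit vector \(x\). For every \(\theta\), since \(A_\theta\) is self-adjoint, we have
\[
\|A_\theta\|^2\ge \|A_\theta x\|^2=\ip{A_\theta^2x}{x}.
\]
Averaging over \([0,2\pi]\) with respect to \(d\theta/(2\pi)\), and using the computation in the proof of Theorem~\ref{thm:lower}, gives
\[
\frac{1}{2\pi}\int_0^{2\pi}A_\theta^2\,d\theta=\frac12(H^2+K^2).
\]
Hence
\[
\wcart{2}(T)^2\ge \frac12\ip{(H^2+K^2)x}{x}.
\]
This is the inequality part. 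It avoids the norm-convexity step, since we only pair with the fixed vector \(x\).

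For the identity, we expand the variance terms. Because \(H\) is self-adjoint and \(h_x=\ip{Hx}{x}\) is real,
\[
\norm{(H-h_xI)x}^2=\norm{Hx}^2-2h_x\,\Re\ip{Hx}{x}+h_x^2=\ip{H^2x}{x}-h_x^2.
\]
In the same way,
\[
\norm{(K-k_xI)x}^2=\ip{K^2x}{x}-k_x^2.
\]
Adding the two gives
\[
V_T(x)=\ip{(H^2+K^2)x}{x}-(h_x^2+k_x^2).
\]
Finally, \(\ip{Tx}{x}=h_x+ik_x\), so \(|\ip{Tx}{x}|^2=h_x^2+k_x^2\). Substituting this into the expression for \(V_T(x)\) yields
\[
\frac12|\ip{Tx}{x}|^2+\frac12V_T(x)=\frac12\ip{(H^2+K^2)x}{x},
\]
which combined with the inequality above proves the lemma.

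There is no real obstacle here. The only point needing care is the variance expansion, where the cross term \(-2h_x\Re\ip{Hx}{x}\) collapses to \(-2h_x^2\); this uses that \(h_x\) is real, which holds because \(H\) is self-adjoint.
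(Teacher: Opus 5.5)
Your proof is correct. It shares the paper's core step, namely $\norm{A_\theta}\ge\norm{A_\theta x}$ followed by averaging in $\theta$, and it uses the same identity $V_T(x)=\ip{(H^2+K^2)x}{x}-|\ip{Tx}{x}|^2$. The difference is in how you evaluate the average of $\norm{A_\theta x}^2$.

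You write $\norm{A_\theta x}^2=\ip{A_\theta^2x}{x}$ and reuse the operator average $\frac12(H^2+K^2)$ from Theorem~\ref{thm:lower}. This lands directly on $\frac12\ip{(H^2+K^2)x}{x}$. The paper instead splits $A_\theta x$ into its component along $x$ and its component in $x^\perp$ and applies Pythagoras. That lands directly on $\frac12|\ip{Tx}{x}|^2+\frac12V_T(x)$.

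The paper proves the variance identity at the end anyway, so your version shows the orthogonal splitting is dispensable, and it is slightly shorter. What the paper's splitting buys is an interpretation. The terms $\frac12|\ip{Tx}{x}|^2$ and $\frac12V_T(x)$ are exactly the averages of the squared lengths of the components of $A_\theta x$ parallel and orthogonal to $x$, respectively.
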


\begin{proof}
Using \(A_\theta\) from the Introduction, we have
\[
A_\theta x
=
(\cos\theta h_x+\sin\theta k_x)x
+
\cos\theta(H-h_xI)x
+
\sin\theta(K-k_xI)x.
\]
The first term lies in \(\mathrm{span}\{x\}\), while the sum of the
remaining two terms lies in \(x^\perp\), since
\[
\ip{(H-h_xI)x}{x}=0,
\qquad
\ip{(K-k_xI)x}{x}=0.
\]
Hence,
\[
\begin{aligned}
\norm{A_\theta x}^2
={}&
(\cos\theta h_x+\sin\theta k_x)^2\\
&+
\norm{
\cos\theta(H-h_xI)x+
\sin\theta(K-k_xI)x
}^2.
\end{aligned}
\]
Integrating the first term gives
\[
\frac1{2\pi}\int_0^{2\pi}
(\cos\theta h_x+\sin\theta k_x)^2\,d\theta
=
\frac12(h_x^2+k_x^2),
\]
and the cross-product between the two error vectors integrates to zero
because \(\int_0^{2\pi}\sin\theta\cos\theta\,d\theta=0\). Thus,
\[
\begin{aligned}
&\frac1{2\pi}\int_0^{2\pi}\norm{A_\theta x}^2\,d\theta\\
&\qquad=
\frac12(h_x^2+k_x^2)
+
\frac12\norm{(H-h_xI)x}^2
+
\frac12\norm{(K-k_xI)x}^2.
\end{aligned}
\]
Since \(\norm{A_\theta}\geq\norm{A_\theta x}\) and \(h_x^2+k_x^2=|\ip{Tx}{x}|^2\),
\[
\wcart{2}(T)^2
\geq
\frac12|\ip{Tx}{x}|^2+\frac12 V_T(x).
\]
Finally,
\[
V_T(x)
=
\ip{H^2x}{x}-h_x^2+\ip{K^2x}{x}-k_x^2
=
\ip{(H^2+K^2)x}{x}-|\ip{Tx}{x}|^2,
\]
which yields the identity
\[
\frac12|\ip{Tx}{x}|^2+\frac12 V_T(x)
=
\frac12\ip{(H^2+K^2)x}{x}.
\]
\end{proof}

We now sharpen the general lower bound \(\omega(T)^2\le 2\wcart{2}(T)^2\).

\begin{theorem}\label{thm:improved-lower}
Let \(T=H+iK\in\B\). Then
\[
\inf_{\|x\|=1}\max\left\{\|(H-h_xI)x\|^2,\;
\|(K-k_xI)x\|^2\right\}
+\omega(T)^2
\le 2\wcart{2}(T)^2.
\]
\end{theorem}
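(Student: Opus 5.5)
The plan is to apply Lemma~\ref{lem:variance} pointwise at each unit vector and then take a supremum over unit vectors. The variance term \(V_T(x)\) dominates the maximum appearing in the statement, and the infimum of that maximum does not depend on \(x\). So the two terms on the left-hand side can be optimized separately.

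First, fix a unit vector \(x\in\Hsp\). By Lemma~\ref{lem:variance},
\[
2\wcart{2}(T)^2\ge |\ip{Tx}{x}|^2+V_T(x),
\qquad
V_T(x)=\norm{(H-h_xI)x}^2+\norm{(K-k_xI)x}^2 .
\]
Both summands of \(V_T(x)\) are nonnegative, so \(V_T(x)\) is at least the larger of them. Hence
\[
2\wcart{2}(T)^2\ge |\ip{Tx}{x}|^2+\max\left\{\norm{(H-h_xI)x}^2,\ \norm{(K-k_xI)x}^2\right\}.
\]

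Second, write \(m\) for the infimum over unit vectors \(y\) of \(\max\{\norm{(H-h_yI)y}^2,\norm{(K-k_yI)y}^2\}\). The maximum at our particular \(x\) is at least \(m\), which gives
\[
2\wcart{2}(T)^2\ge |\ip{Tx}{x}|^2+m
\]
for every unit vector \(x\). The right-hand side now depends on \(x\) only through \(|\ip{Tx}{x}|^2\). Taking the supremum over unit \(x\) and using \(\omega(T)=\sup_{\|x\|=1}|\ip{Tx}{x}|\) yields \(m+\omega(T)^2\le 2\wcart{2}(T)^2\), which is the claim.

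There is no real obstacle; the argument is a one-line consequence of the lemma. The only point needing care is the order of optimization. One must decouple the two terms by first passing to the infimum in the variance part and only then taking the supremum in the \(|\ip{Tx}{x}|^2\) part, since a single \(x\) need not achieve both extremes simultaneously.
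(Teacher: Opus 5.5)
Your proof is correct and follows essentially the same route as the paper: apply Lemma~\ref{lem:variance} at each unit vector, bound \(V_T(x)\) below by the maximum of its two summands, and then take the supremum over unit vectors. The paper phrases it as \(|\ip{Tx}{x}|^2\le 2\wcart{2}(T)^2-\max\{\cdots\}\) before passing to the supremum, whereas you replace the maximum by its infimum first; these are the same argument.
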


\begin{proof}
Let \(x\in\mathcal H\) be a unit vector. From Lemma~\ref{lem:variance},
\[
\frac12\max\left\{\|(H-h_xI)x\|^2,\|(K-k_xI)x\|^2\right\}
+\frac12|\ip{Tx}{x}|^2
\le \wcart{2}(T)^2.
\]
Equivalently,
\[
|\ip{Tx}{x}|^2
\le 2\wcart{2}(T)^2
-\max\left\{\|(H-h_xI)x\|^2,\|(K-k_xI)x\|^2\right\}.
\]
Taking the supremum over all unit vectors \(x\) yields
\[
\omega(T)^2
\le 2\wcart{2}(T)^2
-\inf_{\|x\|=1}\max\left\{\|(H-h_xI)x\|^2,\|(K-k_xI)x\|^2\right\}.
\]
Rearranging proves the claim.
\end{proof}

The next result sharpens \eqref{int1}.
\begin{proposition}\label{prop:improved-bound}
Let \(T=H+iK\in\B\). Then
\[
\inf_{\|x\|=1}\max\left\{\|(H-h_xI)x\|^2,\;
\|(K-k_xI)x\|^2\right\}
+\omega(T)^2
\le \frac12\|T^*T+TT^*\|.
\]
\end{proposition}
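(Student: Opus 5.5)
The plan is to chain Theorem~\ref{thm:improved-lower} with an upper bound for \(\wcart{2}(T)^2\) by \(\tfrac14\|T^*T+TT^*\|\). Let \(m\) denote the infimum on the left, so Theorem~\ref{thm:improved-lower} reads \(m+\omega(T)^2\le 2\wcart{2}(T)^2\). It then suffices to prove
\[
2\wcart{2}(T)^2\le \|H^2+K^2\|=\tfrac12\|T^*T+TT^*\|,
\]
which is exactly \eqref{int1} after averaging. Averaging helps because \eqref{int1} already gives \(\|A_\theta\|^2\le\omega(T)^2\le\|H^2+K^2\|\), but that only yields \(\wcart{2}(T)^2\le \|H^2+K^2\|\), which is off by a factor of \(2\).

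So the real task is the sharper bound. For each \(\theta\) and unit \(x\), Cauchy--Schwarz in \(\R^2\) gives the pointwise estimate
\[
\|A_\theta x\|\le |\cos\theta|\,\|Hx\|+|\sin\theta|\,\|Kx\|\le \ip{(H^2+K^2)x}{x}^{1/2}.
\]
This recovers only the crude bound again. I would therefore not expect \(2\wcart{2}(T)^2\le\|H^2+K^2\|\) to hold in general, and I would test it on the example of Theorem~\ref{thm:comparison}. There \(H^2=K^2=I\), \(HK+KH=O\), and \(\wcart{2}(T)=1\), so \(2\wcart{2}(T)^2=2>1=\|H^2+K^2\|\). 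The chaining strategy fails, and a direct argument is needed.

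The direct route works pointwise in \(x\), in the style of Lemma~\ref{lem:variance}. Fix a unit \(x\). The identity established there, \(V_T(x)=\ip{(H^2+K^2)x}{x}-|\ip{Tx}{x}|^2\), gives
\[
|\ip{Tx}{x}|^2+\|(H-h_xI)x\|^2+\|(K-k_xI)x\|^2=\ip{(H^2+K^2)x}{x}\le\|H^2+K^2\|.
\]
Dropping the smaller of the two variance terms leaves
\[
|\ip{Tx}{x}|^2+\max\{\|(H-h_xI)x\|^2,\|(K-k_xI)x\|^2\}\le \tfrac12\|T^*T+TT^*\|.
\]
Bounding the max from below by the infimum \(m\) gives \(|\ip{Tx}{x}|^2+m\le\tfrac12\|T^*T+TT^*\|\) for every unit \(x\). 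Taking the supremum over unit vectors then yields \(\omega(T)^2+m\le \tfrac12\|T^*T+TT^*\|\), which is the claim.

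The main obstacle is correctly ordering the infimum and the supremum. The infimum must be applied before the supremum over \(x\), which is legitimate because \(m\) is a lower bound for the max at every \(x\). The only other ingredient is the variance identity from Lemma~\ref{lem:variance}.
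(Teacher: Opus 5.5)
Your final argument is correct and is exactly the paper's proof: use the pointwise identity \(|\ip{Tx}{x}|^2+V_T(x)=\ip{(H^2+K^2)x}{x}\le\|H^2+K^2\|\) from Lemma~\ref{lem:variance}, drop the smaller variance term, bound the max below by its infimum, and take the supremum over unit \(x\). One slip in your discarded exploration: in the example with \(H^2=K^2=I\) and \(HK+KH=O\) one has \(H^2+K^2=2I\), so \(2\wcart{2}(T)^2=2=\|H^2+K^2\|\) is an equality case, not a counterexample; the chaining route still fails in general (Theorem~\ref{thm:lower} gives the reverse inequality, so chaining would force equality always), for instance at \(T=\mathrm{diag}(1,i)\), where \(2\wcart{2}(T)^2=1+2/\pi>1=\|H^2+K^2\|\).
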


\begin{proof}
Let \(x\in\mathcal H\) be a unit vector. By Lemma~\ref{lem:variance}, we have the identity
\[
\frac12|\ip{Tx}{x}|^2+\frac12 V_T(x)
=
\frac12\ip{(H^2+K^2)x}{x}.
\]
Since
\[
V_T(x)=\|(H-h_xI)x\|^2+\|(K-k_xI)x\|^2
\ge
\max\left\{\|(H-h_xI)x\|^2,\|(K-k_xI)x\|^2\right\},
\]
we obtain
\[
\frac12\max\left\{\|(H-h_xI)x\|^2,\|(K-k_xI)x\|^2\right\}
+\frac12|\ip{Tx}{x}|^2
\le \frac12\ip{(H^2+K^2)x}{x}
\le \frac12\|H^2+K^2\|.
\]
Thus,
\[
|\ip{Tx}{x}|^2
\le \|H^2+K^2\|
-\max\left\{\|(H-h_xI)x\|^2,\|(K-k_xI)x\|^2\right\}.
\]
Taking the supremum over all unit vectors \(x\),
\[
\omega(T)^2
\le \|H^2+K^2\|
-\inf_{\|x\|=1}\max\left\{\|(H-h_xI)x\|^2,\|(K-k_xI)x\|^2\right\}.
\]
Since \(H^2+K^2=\frac12(T^*T+TT^*)\), rearranging gives the desired inequality.
\end{proof}

\begin{remark}\label{rem:strict-example}
The inequalities in Theorem~\ref{thm:improved-lower} and Proposition~\ref{prop:improved-bound} are generally strict, and the infimum of the maximum can be strictly positive. Indeed, take
\[
T=\begin{bmatrix}0&1\\0&0\end{bmatrix}.
\]
Its Cartesian parts are
\[
H=\frac12\begin{bmatrix}0&1\\1&0\end{bmatrix},\qquad
K=\frac12\begin{bmatrix}0&-i\\ i&0\end{bmatrix}.
\]
For a unit vector \(x=(a,b)^T\in\mathbb C^2\),
\[
h_x=\langle Hx,x\rangle=\Re(a\overline b),\qquad
k_x=\langle Kx,x\rangle=-\Im(a\overline b).
\]
Since \(\|Hx\|^2=\|Kx\|^2=\frac14\),
\[
\|(H-h_xI)x\|^2=\frac14-h_x^2,\qquad
\|(K-k_xI)x\|^2=\frac14-k_x^2.
\]
Also,
\[
h_x^2+k_x^2=|a\overline b|^2\le\frac14,
\]
with equality if and only if \(|a|=|b|=\frac1{\sqrt2}\). Hence,
\[
\max\left\{\frac14-h_x^2,\frac14-k_x^2\right\}
=\frac14-\min\{h_x^2,k_x^2\}
\ge\frac14-\frac18=\frac18,
\]
and equality is attained when \(h_x^2=k_x^2=\frac18\), for instance at
\(x=\frac1{\sqrt2}(1,e^{i\pi/4})^T\). Therefore,
\[
\inf_{\|x\|=1}\max\left\{\|(H-h_xI)x\|^2,\|(K-k_xI)x\|^2\right\}
=\frac18>0.
\]
Moreover, \(\omega(T)=\frac12\), \(\wcart{2}(T)^2=\frac14\), and
\(T^*T+TT^*=I_2\). Consequently,
\[
\frac18+\omega(T)^2=\frac38<\frac12=2\wcart{2}(T)^2,
\]
and
\[
\frac18+\omega(T)^2=\frac38<\frac12=\frac12\|T^*T+TT^*\|.
\]
Thus, both inequalities are strict, with a nonzero max part.
\end{remark}

The upper bound for \(\wcart{2}(T)\) in terms of \(\Re T\) and \(\Im T\) is given in the following theorem.

\begin{theorem}\label{thm:upperomega2}
Let \(T=H+iK\in\B\). Then
\[
\wcart{2}(T)^2\leq\frac12\norm H^2+\frac12\norm K^2+\frac{2}{\pi}\norm H\norm K.
\]
\end{theorem}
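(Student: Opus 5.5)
The plan is to bound the integrand pointwise by the triangle inequality and then integrate explicitly. For every \(\theta\),
\[
\norm{A_\theta}\le |\cos\theta|\,\norm H+|\sin\theta|\,\norm K .
\]
Squaring gives
\[
\norm{A_\theta}^2\le \cos^2\theta\,\norm H^2+\sin^2\theta\,\norm K^2+2|\sin\theta\cos\theta|\,\norm H\norm K .
\]
The absolute values are essential. Keeping the signs would make the cross term integrate to zero, which would give a false bound; this is exactly why the factor \(\frac2\pi\) appears.

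Next I average over \([0,2\pi]\) with respect to \(d\theta/(2\pi)\). As in the proof of Theorem~\ref{thm:lower}, the averages of \(\cos^2\theta\) and \(\sin^2\theta\) are both \(\frac12\). For the cross term I use \(2|\sin\theta\cos\theta|=|\sin 2\theta|\) and compute
\[
\frac1{2\pi}\int_0^{2\pi}|\sin 2\theta|\,d\theta=\frac1{2\pi}\int_0^{2\pi}|\sin u|\,du=\frac{1}{2\pi}\cdot 4=\frac2\pi .
\]
Here the substitution \(u=2\theta\) runs over two periods, and the factor \(\frac12\) from \(d\theta=du/2\) cancels that doubling.

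Combining these averages yields
\[
\wcart{2}(T)^2\le\frac12\norm H^2+\frac12\norm K^2+\frac2\pi\norm H\norm K,
\]
which is the claim. The only step needing care is the evaluation of \(\frac1{2\pi}\int_0^{2\pi}|\sin2\theta|\,d\theta\), specifically keeping track of the normalization; everything else is routine.
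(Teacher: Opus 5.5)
Your proposal is correct and follows the paper's proof: you use the pointwise bound \(\norm{A_\theta}\le|\cos\theta|\norm H+|\sin\theta|\norm K\), square it, and average using \(\frac1{2\pi}\int_0^{2\pi}\cos^2\theta\,d\theta=\frac1{2\pi}\int_0^{2\pi}\sin^2\theta\,d\theta=\frac12\). Your evaluation of the cross term as the average of \(|\sin2\theta|\), equal to \(\frac2\pi\), is the same computation as the paper's \(\frac1{2\pi}\int_0^{2\pi}|\sin\theta\cos\theta|\,d\theta=\frac1\pi\).
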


\begin{proof}
The result follows from
\[
\norm{\cos\theta H+\sin\theta K}
\leq
|\cos\theta|\norm H+|\sin\theta|\norm K
\]
and the identities
\[
\frac1{2\pi}\int_0^{2\pi}\cos^2\theta\,d\theta
=
\frac12,
\]
\[
\frac1{2\pi}\int_0^{2\pi}\sin^2\theta\,d\theta
=
\frac12,
\]
and
\[
\frac1{2\pi}\int_0^{2\pi}
|\sin\theta\cos\theta|\,d\theta
=
\frac1{\pi}.
\]
\end{proof}

\begin{remark}
The mixed coefficient \(2/\pi\) in Theorem~\ref{thm:upperomega2} is sharp. Let
\[
H=I_2,\qquad
K=\begin{bmatrix}0&1\\1&0\end{bmatrix}.
\]
Then
\[
\cos\theta H+\sin\theta K
=
\begin{bmatrix}\cos\theta&\sin\theta\\ \sin\theta&\cos\theta\end{bmatrix},
\]
whose eigenvalues are \(\cos\theta+\sin\theta\) and \(\cos\theta-\sin\theta\). Hence,
\[
\|\cos\theta H+\sin\theta K\|=|\cos\theta|+|\sin\theta|.
\]
Therefore,
\[
\wcart{2}(T)^2
=
\frac1{2\pi}\int_0^{2\pi}(|\cos\theta|+|\sin\theta|)^2\,d\theta
=
1+\frac{2}{\pi}.
\]
Since \(\|H\|=\|K\|=1\), the right-hand side of Theorem~\ref{thm:upperomega2} is also \(1+2/\pi\). Thus, we cannot reduce the mixed term while keeping the two square-term coefficients at \(1/2\).
\end{remark}

\begin{remark}\label{rem:norm-chain-cart}
Let \(T=H+iK\in\mathcal B(\mathcal H)\). The classical bound
\(\omega(T)\le\norm{H}+\norm{K}\) can be refined in the following way:
\[
\omega(T)\le \sqrt{2}\,\wcart{2}(T)
\le \sqrt{\norm{H}^2+\norm{K}^2+\frac{4}{\pi}\norm{H}\norm{K}}
\le \norm{H}+\norm{K}.
\]
The first inequality follows from \(\frac1{\sqrt2}\omega(T)\le \wcart{2}(T)\),
established in Theorem~\ref{thm:comparison}. For the second,
Theorem~\ref{thm:upperomega2} gives
\[
\wcart{2}(T)^2
\le \frac12\norm{H}^2+\frac12\norm{K}^2+\frac{2}{\pi}\norm{H}\norm{K},
\]
whence
\[
\sqrt2\,\wcart{2}(T)
\le
\sqrt{\norm{H}^2+\norm{K}^2+\frac{4}{\pi}\norm{H}\norm{K}}.
\]
The last inequality is a consequence of \(\frac4\pi<2\), which implies
\[
\norm{H}^2+\norm{K}^2+\frac{4}{\pi}\norm{H}\norm{K}
\le
\norm{H}^2+\norm{K}^2+2\norm{H}\norm{K}
=
(\norm{H}+\norm{K})^2.
\]
Taking square roots yields the last inequality in the chain.
\end{remark}

\begin{theorem}\label{thm:commuting}
Let \(T=H+iK\in\mathcal B(\mathcal H)\). Then
\[
\frac12\|H^2+K^2\|\le \wcart{2}(T)^2\le \|H^2+K^2\|.
\]
Moreover, if \(K=\lambda H\) for some \(\lambda\in\mathbb R\) (or \(H=\lambda K\)), then
\[
\wcart{2}(T)^2=\frac12\|H^2+K^2\|=\frac14\|T^*T+TT^*\|.
\]
\end{theorem}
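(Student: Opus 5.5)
The lower bound \(\frac12\|H^2+K^2\|\le \wcart{2}(T)^2\) is exactly Theorem~\ref{thm:lower}, so nothing new is needed there. For the upper bound I will combine Theorem~\ref{thm:comparison}, which gives \(\wcart{2}(T)^2\le\omega(T)^2\), with Kittaneh's inequality \eqref{int1}, \(\omega(T)^2\le \|H^2+K^2\|\). Alternatively, the upper bound can be shown pointwise in \(\theta\). Since \(A_\theta\) is self-adjoint, for a unit vector \(x\) we have
\[
\|A_\theta x\|\le |\cos\theta|\,\|Hx\|+|\sin\theta|\,\|Kx\|.
\]
By Cauchy--Schwarz in \(\R^2\) this is at most \((\|Hx\|^2+\|Kx\|^2)^{1/2}=\ip{(H^2+K^2)x}{x}^{1/2}\). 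Hence \(\|A_\theta\|^2\le\|H^2+K^2\|\) for every \(\theta\), and averaging over \(\theta\) gives the upper bound directly.

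For the second assertion, assume \(K=\lambda H\) with \(\lambda\in\R\); the case \(H=\lambda K\) is symmetric after exchanging the roles of \(\cos\) and \(\sin\). Then \(A_\theta=(\cos\theta+\lambda\sin\theta)H\), so
\[
\|A_\theta\|^2=(\cos\theta+\lambda\sin\theta)^2\|H\|^2 .
\]
Averaging over \([0,2\pi]\) with the identities already used in Theorem~\ref{thm:lower} (mean of \(\cos^2\) and \(\sin^2\) equal to \(\frac12\), mean of \(\sin\theta\cos\theta\) equal to \(0\)) gives \(\wcart{2}(T)^2=\frac12(1+\lambda^2)\|H\|^2\). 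On the other hand, \(H^2+K^2=(1+\lambda^2)H^2\), so \(\|H^2+K^2\|=(1+\lambda^2)\|H\|^2\), using \(\|H^2\|=\|H\|^2\) for self-adjoint \(H\). This yields \(\wcart{2}(T)^2=\frac12\|H^2+K^2\|\), and the identity \(H^2+K^2=\frac12(T^*T+TT^*)\) from Theorem~\ref{thm:lower} turns it into \(\frac14\|T^*T+TT^*\|\).

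There is no real obstacle; the only point needing care is the use of \(\|H^2\|=\|H\|^2\) and the proportionality \(K=\lambda H\), which reduce the family \(A_\theta\) to scalar multiples of a single operator.
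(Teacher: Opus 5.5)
Your proof is correct and follows the paper's argument: the lower bound from Theorem~\ref{thm:lower}, the upper bound from Theorem~\ref{thm:comparison} combined with Kittaneh's estimate, and the computation $\wcart{2}(T)^2=\frac{1+\lambda^2}{2}\|H\|^2=\frac12\|H^2+K^2\|$ in the proportional case. Your alternative pointwise argument (triangle inequality followed by Cauchy--Schwarz, giving $\|A_\theta\|^2\le\|H^2+K^2\|$ for every $\theta$) is also valid and in effect reproves Kittaneh's inequality, which makes the upper bound self-contained; it uses the self-adjointness of $H$ and $K$ in the identity $\|Hx\|^2+\|Kx\|^2=\ip{(H^2+K^2)x}{x}$, not the self-adjointness of $A_\theta$.
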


\begin{proof}
The lower bound is Theorem~\ref{thm:lower}. For the upper bound, use Theorem~\ref{thm:comparison} and the classical estimate \cite{Kittaneh2005}
\[
\omega(T)^2\le \|H^2+K^2\|.
\]
Thus,
\[
\wcart{2}(T)^2\le \omega(T)^2\le \|H^2+K^2\|.
\]

Now, suppose \(K=\lambda H\) for some \(\lambda\in\mathbb R\). If \(\lambda=0\), then \(T=H\) is self-adjoint, and the assertion follows from Proposition~\ref{prop:selfadjoint}. Assume \(\lambda\ne0\). Then
\[
\cos\theta H+\sin\theta K
=(\cos\theta+\lambda\sin\theta)H.
\]
Since \(H\) is self-adjoint,
\[
\|\cos\theta H+\sin\theta K\|
=|\cos\theta+\lambda\sin\theta|\,\|H\|.
\]
Thus,
\[
\wcart{2}(T)^2
=\frac1{2\pi}\int_0^{2\pi}|\cos\theta+\lambda\sin\theta|^2\,d\theta\,\|H\|^2.
\]
Using
\[
\frac1{2\pi}\int_0^{2\pi}\cos^2\theta\,d\theta
=\frac1{2\pi}\int_0^{2\pi}\sin^2\theta\,d\theta
=\frac12,
\]
and
\[
\frac1{2\pi}\int_0^{2\pi}\sin\theta\cos\theta\,d\theta=0,
\]
we obtain
\[
\wcart{2}(T)^2=\frac{1+\lambda^2}{2}\|H\|^2.
\]
On the other hand,
\[
H^2+K^2=(1+\lambda^2)H^2,
\]
so
\[
\|H^2+K^2\|=(1+\lambda^2)\|H\|^2.
\]
Therefore,
\[
\wcart{2}(T)^2=\frac12\|H^2+K^2\|.
\]
Finally, since
\[
H^2+K^2=\frac{T^*T+TT^*}{2},
\]
the equivalent form follows.
\end{proof}

\begin{remark}
Commutativity of \(H\) and \(K\) alone does not imply equality in Theorem~\ref{thm:commuting}. For example, let
\[
T=\begin{bmatrix}1&0\\0&i\end{bmatrix},\qquad
H=\begin{bmatrix}1&0\\0&0\end{bmatrix},\qquad
K=\begin{bmatrix}0&0\\0&1\end{bmatrix}.
\]
Then \(H\) and \(K\) commute, but \(\wcart{2}(T)^2\neq \frac12\|H^2+K^2\|\). Indeed, \(H^2+K^2=I\) and
\[
\|\cos\theta H+\sin\theta K\|
=
\max\{|\cos\theta|,|\sin\theta|\},
\]
so
\[
\wcart{2}(T)^2=\frac12+\frac1\pi\neq \frac12.
\]
\end{remark}

The following theorem gives a sharp two-sided estimate for the norm \(\|T+S\|\) in terms of the \(L^2\)-Cartesian numerical radius of an off-diagonal block operator.

\begin{theorem}\label{thm:block}
Let \(T, S\in\B\). Then
\[
\|T+S\|\leq 2 \sqrt{2}\, \wcart{2}(\mathbb M_{T,S})
\leq \sqrt{2}(\|T\|+\|S\|),
\]
and both constants are sharp. More precisely, the constant \(2\sqrt2\) in the first inequality and the constant \(\sqrt2\) in the second inequality are both sharp; equivalently, the constants \(2\sqrt2\) and \(2\) in the two displayed inequalities
\[
\|T+S\|\le 2\sqrt2\,\wcart{2}(\mathbb M_{T,S}),
\qquad
2\,\wcart{2}(\mathbb M_{T,S})\le \|T\|+\|S\|
\]
are both sharp.
\end{theorem}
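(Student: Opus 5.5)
The plan is to reduce both inequalities to the classical numerical radius estimate for $\mathbb M_{T,S}$ and then apply Theorem~\ref{thm:comparison}. The classical statement, cf.\ \cite[Theorem 2.3]{KittanehMoslehianYamazaki2015}, is
\[
\tfrac12\|T+S\|\le \omega(\mathbb M_{T,S})\le \tfrac12(\|T\|+\|S\|).
\]

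For the first inequality, I will check the lower half of this classical estimate directly. Take unit vectors $x,y\in\Hsp$ and put $z=\frac1{\sqrt2}(y\oplus x)$, which is a unit vector in $\Hsp\oplus\Hsp$. Then
\[
\ip{\mathbb M_{T,S}z}{z}=\tfrac12\bigl(\ip{Tx}{y}+\ip{S^*y}{x}\bigr)=\tfrac12\bigl(\ip{Tx}{y}+\ip{Sx}{y}\bigr)=\tfrac12\ip{(T+S)x}{y}.
\]
Taking the supremum over $x,y$ gives $\|T+S\|\le 2\,\omega(\mathbb M_{T,S})$. Theorem~\ref{thm:comparison} gives $\omega(\mathbb M_{T,S})\le\sqrt2\,\wcart{2}(\mathbb M_{T,S})$, and combining the two yields $\|T+S\|\le 2\sqrt2\,\wcart{2}(\mathbb M_{T,S})$.

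For the second inequality, Theorem~\ref{thm:comparison} gives $\wcart{2}(\mathbb M_{T,S})\le\omega(\mathbb M_{T,S})$. It then suffices to bound $\omega(\mathbb M_{T,S})\le\|\mathbb M_{T,S}\|$ by $\frac12(\|T\|+\|S\|)$. To do this, split $\mathbb M_{T,S}$ as the sum of the two single off-diagonal blocks
\[
\begin{bmatrix}O&T\\O&O\end{bmatrix}\quad\text{and}\quad\begin{bmatrix}O&O\\S^*&O\end{bmatrix}.
\]
Each block has numerical radius half its norm, by the standard fact for such nilpotent blocks, which can be verified by a unit-vector computation with the AM--GM inequality $2\|u\|\|v\|\le\|u\|^2+\|v\|^2$. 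Subadditivity of $\omega$ then finishes the estimate. This gives $2\wcart{2}(\mathbb M_{T,S})\le\|T\|+\|S\|$, and multiplying by $\sqrt2$ gives the displayed right-hand inequality.

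For sharpness I will use two examples on scalars or identities.
\begin{itemize}
\item For the constant $2\sqrt2$, take $T=S=I$. Then $\mathbb M_{I,I}$ is self-adjoint with norm $1$, so Proposition~\ref{prop:selfadjoint} gives $\wcart{2}(\mathbb M_{I,I})=1/\sqrt2$. Hence $2\sqrt2\,\wcart{2}=2=\|T+S\|$.
\item For the second constant, take $\Hsp=\C$, $T=1$ and $S=0$. Then $\mathbb M_{T,S}=\begin{bmatrix}0&1\\0&0\end{bmatrix}$, for which Remark~\ref{rem:strict-example} computed $\wcart{2}^2=\frac14$. So $2\wcart{2}=1=\|T\|+\|S\|$, and equivalently $2\sqrt2\,\wcart{2}=\sqrt2(\|T\|+\|S\|)$.
\end{itemize}

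The only step needing care is the upper bound $\omega(\mathbb M_{T,S})\le\frac12(\|T\|+\|S\|)$. If that step causes trouble, I will simply cite it from \cite[Theorem 2.3]{KittanehMoslehianYamazaki2015}. Everything else is a direct combination with Theorem~\ref{thm:comparison}.
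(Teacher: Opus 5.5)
Your plan is the paper's plan: combine the Kittaneh--Moslehian--Yamazaki estimate $\|T+S\|\le 2\omega(\mathbb M_{T,S})\le\|T\|+\|S\|$ with Theorem~\ref{thm:comparison}, and use the same two sharpness examples. The paper simply cites the classical estimate. You instead verify its lower half directly, and that verification contains an error. Since $\ip{S^*y}{x}=\ip{y}{Sx}=\overline{\ip{Sx}{y}}$, you actually have
\[
\ip{\mathbb M_{T,S}z}{z}=\tfrac12\bigl(\ip{Tx}{y}+\overline{\ip{Sx}{y}}\bigr).
\]
This is not $\frac12\ip{(T+S)x}{y}$ in a complex Hilbert space. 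For $\Hsp=\C$, $T=S=i$ and $x=y=1$, the left side is $0$ while $\frac12\ip{(T+S)x}{y}=i$. So your supremum step does not yield $\|T+S\|\le 2\omega(\mathbb M_{T,S})$ as written.

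The repair is short. Take real parts, $\Re\ip{\mathbb M_{T,S}z}{z}=\frac12\Re\ip{(T+S)x}{y}$, and replace $y$ by $e^{i\phi}y$ with $\phi$ chosen so that the right-hand side equals $\frac12|\ip{(T+S)x}{y}|$. Equivalently, $\Re\mathbb M_{T,S}=\frac12\begin{bmatrix}O&T+S\\(T+S)^*&O\end{bmatrix}$ has norm $\frac12\|T+S\|$, and $\|\Re\mathbb M_{T,S}\|\le\omega(\mathbb M_{T,S})$; this is the case $\theta=0$ of the Cartesian characterization.

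Your upper half is correct. Split $\mathbb M_{T,S}$ into the two nilpotent off-diagonal blocks and bound the numerical radius of each by half its norm via $2\|u\|\|v\|\le\|u\|^2+\|v\|^2$. Subadditivity of $\omega$ then gives $\omega(\mathbb M_{T,S})\le\frac12(\|T\|+\|S\|)$.

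Drop the mention of $\|\mathbb M_{T,S}\|$ there, though. We have $\|\mathbb M_{T,S}\|=\max\{\|T\|,\|S\|\}$, which generally exceeds $\frac12(\|T\|+\|S\|)$ (e.g.\ $T=1$, $S=0$). So the bound must be applied to $\omega(\mathbb M_{T,S})$ directly, which is what your splitting argument in fact does. Your sharpness examples coincide with the paper's and are correct.
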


\begin{proof}
The classical numerical radius refinement of the triangle inequality, due to Kittaneh, Moslehian, and Yamazaki \cite[Theorem 2.3]{KittanehMoslehianYamazaki2015}, states that
\[
\|T+S\|\le 2\omega(\mathbb M_{T,S})\le \|T\|+\|S\|.
\]
By Theorem~\ref{thm:comparison},
\[
\frac1{\sqrt2}\omega(\mathbb M_{T,S})\le \wcart{2}(\mathbb M_{T,S})\le \omega(\mathbb M_{T,S}).
\]
Hence,
\[
\|T+S\|\le 2\omega(\mathbb M_{T,S})\le 2\sqrt2\,\wcart{2}(\mathbb M_{T,S}),
\]
and
\[
2\sqrt2\,\wcart{2}(\mathbb M_{T,S})\le 2\sqrt2\,\omega(\mathbb M_{T,S})\le \sqrt2(\|T\|+\|S\|).
\]
This proves the desired two-sided estimate.

To see that the left constant is sharp, take \(T=S=1\) on \(\mathbb C\). Then
\[
\mathbb M_{T,S}=
\begin{bmatrix}
0&1\\
1&0
\end{bmatrix},
\]
which is self-adjoint with norm \(1\). By Proposition~\ref{prop:selfadjoint},
\[
\wcart{2}(\mathbb M_{T,S})=\frac1{\sqrt2}.
\]
Also, \(\|T+S\|=2\), and therefore,
\[
2\sqrt2\,\wcart{2}(\mathbb M_{T,S})=2=\|T+S\|.
\]

To see that the right constant \(\sqrt2\) is sharp, take \(T=1\) and \(S=0\) on \(\mathbb C\). Then
\[
\mathbb M_{T,S}=
\begin{bmatrix}
0&1\\
0&0
\end{bmatrix}.
\]
Its Cartesian parts are
\[
H=\frac12\begin{bmatrix}0&1\\1&0\end{bmatrix},\qquad
K=\frac12\begin{bmatrix}0&-i\\i&0\end{bmatrix},
\]
and hence,
\[
A_\theta=\cos\theta H+\sin\theta K
=\frac12\begin{bmatrix}0&\cos\theta-i\sin\theta\\ \cos\theta+i\sin\theta&0\end{bmatrix}.
\]
Thus, \(\|A_\theta\|=1/2\) for every \(\theta\), so \(\wcart{2}(\mathbb M_{T,S})=1/2\). Therefore,
\[
2\sqrt2\,\wcart{2}(\mathbb M_{T,S})=\sqrt2=\sqrt2(\|T\|+\|S\|).
\]
\end{proof}

\begin{remark}
The classical numerical radius version of the inequality in Theorem~\ref{thm:block} is
\[
\|T+S\|\le 2 \omega(\mathbb M_{T,S})\le \|T\|+\|S\|,
\]
for \(T, S\in\B\); see \cite[Theorem 2.3]{KittanehMoslehianYamazaki2015}. View the version above as an analog of this classical refinement, rather than a refinement of the ordinary triangle inequality, because its right-hand side contains the factor \(\sqrt2\).
\end{remark}

\section{The \texorpdfstring{\(L^p\)}{Lp}-Cartesian numerical radius}
\label{sec:Lp}

In this section we show that the sharp comparison of Theorem~\ref{thm:comparison} admits a natural one-parameter extension.
We recall from the Introduction that, for \(1\le p<\infty\), the \(L^p\)-Cartesian numerical radius \(\wcart{p}(T)\) is defined by \eqref{eq:def-omega-p}, and the constant \(c_p\) is defined by \eqref{eq:def-cp}. For \(p=2\), this recovers the quantity \(\wcart{2}\) studied in Section~2, and \(c_2=1/\sqrt2\). Thus, the definition in the Introduction provides a one-parameter family that interpolates between the \(L^1\)-average and the supremum in the Cartesian characterization of the numerical radius.

Throughout this section, \(T=H+iK\in\mathcal B(\mathcal H)\). By the Cartesian characterization of the numerical radius,
\begin{equation}\label{eq:cart}
\omega(T)=\sup_{\theta\in[0,2\pi]}\|A_\theta\|.
\end{equation}

\begin{theorem}\label{thm:Lp-norm}
For every \(1\le p<\infty\), the functional
\(T\mapsto\wcart{p}(T)\) from \(\mathcal B(\mathcal H)\) to \([0,\infty)\) is a norm. More precisely:
\begin{itemize}
\item[(i)] \(\wcart{p}(T)\ge 0\), and \(\wcart{p}(T)=0\) if and only if \(T=O\);
\item[(ii)] \(\wcart{p}(\lambda T)=|\lambda|\,\wcart{p}(T)\) for all \(\lambda\in\mathbb C\);
\item[(iii)] \(\wcart{p}(S+T)\le\wcart{p}(S)+\wcart{p}(T)\);
\item[(iv)] \(\wcart{p}(U^*TU)=\wcart{p}(T)\) for every unitary \(U\in\mathcal B(\mathcal H)\);
\item[(v)] \(\wcart{p}(e^{i\phi}T)=\wcart{p}(T)\) and \(\wcart{p}(T^*)=\wcart{p}(T)\)
for every \(\phi\in\mathbb R\).
\end{itemize}
\end{theorem}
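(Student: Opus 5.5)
The plan is to repeat the proof of Theorem~\ref{thm:norm} with the exponent \(2\) replaced by \(p\), noting at each step that only the \(L^p\) structure of the normalized measure \(d\theta/(2\pi)\) on \([0,2\pi]\) is used. Throughout, put \(f_T(\theta)=\|A_\theta\|=\|\cos\theta H+\sin\theta K\|\). This function is continuous, since \(\bigl|\|A_\theta\|-\|A_{\theta'}\|\bigr|\le|\cos\theta-\cos\theta'|\|H\|+|\sin\theta-\sin\theta'|\|K\|\). It is also \(2\pi\)-periodic and bounded by \(\omega(T)\). Hence \(\wcart{p}(T)=\|f_T\|_{L^p(d\theta/2\pi)}\) is finite.

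For (i), nonnegativity is clear. If \(\wcart{p}(T)=0\), then \(\int_0^{2\pi} f_T^p\,d\theta=0\) with \(f_T^p\) continuous and nonnegative, so \(f_T\equiv0\). Evaluating at \(\theta=0\) and \(\theta=\pi/2\) gives \(H=K=O\), hence \(T=O\).

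For (ii), write \(\lambda=re^{i\phi}\). The computation in Theorem~\ref{thm:norm}(ii) gives \(A_\theta(\lambda T)=r A_{\theta-\phi}(T)\), so \(f_{\lambda T}(\theta)=r f_T(\theta-\phi)\). Translation invariance of \(d\theta\) on the circle, which uses periodicity, then yields \(\wcart{p}(\lambda T)=|\lambda|\wcart{p}(T)\). The first part of (v) is the special case \(r=1\).

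For (iii), linearity of the Cartesian decomposition gives \(A_\theta(S+T)=A_\theta(S)+A_\theta(T)\). The triangle inequality for the operator norm then gives \(f_{S+T}\le f_S+f_T\) pointwise. Monotonicity of the \(L^p\) norm on nonnegative functions and Minkowski's inequality in \(L^p(d\theta/2\pi)\), valid for \(1\le p<\infty\), give
\[
\wcart{p}(S+T)\le\|f_S+f_T\|_{L^p}\le\|f_S\|_{L^p}+\|f_T\|_{L^p}=\wcart{p}(S)+\wcart{p}(T).
\]

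For (iv), \(U^*TU\) has Cartesian parts \(U^*HU\) and \(U^*KU\). Thus \(A_\theta(U^*TU)=U^*A_\theta U\), which has the same norm as \(A_\theta\), so the integrands coincide. For the second part of (v), \(T^*=H-iK\) gives \(f_{T^*}(\theta)=\|\cos\theta H-\sin\theta K\|=f_T(-\theta)\). The substitution \(\theta\mapsto-\theta\) together with periodicity finishes the proof.

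No step is a genuine obstacle; the only point needing care is (iii), where Minkowski's inequality requires \(p\ge1\). This is exactly why the range \(1\le p<\infty\) is imposed, since for \(p<1\) the triangle inequality fails.
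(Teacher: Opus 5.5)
Your proposal is correct and matches the paper's proof. The paper also transfers the arguments of Theorem~\ref{thm:norm} for (i), (ii), (iv), (v) from \(L^2\) to \(L^p\), and singles out (iii) as the only step needing Minkowski's inequality in \(L^p(d\theta/2\pi)\). You write out the details it omits: continuity of \(f_T\), the rotation identity \(f_{\lambda T}(\theta)=|\lambda|f_T(\theta-\phi)\), and the substitution \(\theta\mapsto-\theta\).
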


\begin{proof}
For \(p=2\), the result is Theorem~\ref{thm:norm}. For general \(p\), the arguments for (i), (ii), (iv), and (v) are the same as those in Theorem~\ref{thm:norm}, with \(\|\cdot\|_{L^2}\) replaced by \(\|\cdot\|_{L^p}\); we omit repetitions. Only (iii) needs the corresponding Minkowski inequality: with \(f_S,f_T,f_{S+T}\) as in the proof of Theorem~\ref{thm:norm}, we have \(f_{S+T}\le f_S+f_T\), and hence
\[
\wcart{p}(S+T)=\|f_{S+T}\|_{L^p}\le\|f_S+f_T\|_{L^p}\le\|f_S\|_{L^p}+\|f_T\|_{L^p}=\wcart{p}(S)+\wcart{p}(T).
\]
\end{proof}

\begin{theorem}\label{thm:Lp-comparison}
Let \(T\in\mathcal B(\mathcal H)\) and \(1\le p<\infty\). Then
\begin{equation}\label{eq:Lp-comparison}
c_p\,\omega(T)\le\wcart{p}(T)\le\omega(T),
\end{equation}
where \(c_p\) is given by \eqref{eq:def-cp}. Both constants are sharp.
More precisely:
\begin{itemize}
\item[(a)] Equality in the left-hand inequality holds for every nonzero
self-adjoint \(T\);
\item[(b)] Equality in the right-hand inequality holds for
\(T=H+iK\) with \(H^2=K^2=I\) and \(HK+KH=O\).
\end{itemize}
\end{theorem}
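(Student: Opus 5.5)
The plan is to follow the proof of Theorem~\ref{thm:comparison}, replacing the exponent \(2\) by \(p\). For the upper bound, the Cartesian characterization \eqref{eq:cart} gives \(\|A_\theta\|\le\omega(T)\) for every \(\theta\). Raising this to the \(p\)-th power and averaging over \([0,2\pi]\) with respect to \(d\theta/(2\pi)\) yields \(\wcart{p}(T)\le\omega(T)\) at once.

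For the lower bound I fix a unit vector \(x\) and write \(\ip{Tx}{x}=a+ib\) with \(a=h_x\), \(b=k_x\). Then
\[
\|A_\theta\|\ge|\ip{A_\theta x}{x}|=|a\cos\theta+b\sin\theta|.
\]
Set \(r=\sqrt{a^2+b^2}=|\ip{Tx}{x}|\) and choose \(\phi\) with \(a=r\cos\phi\), \(b=r\sin\phi\). The right-hand side becomes \(r|\cos(\theta-\phi)|\). By \(2\pi\)-periodicity, the translation \(\theta\mapsto\theta+\phi\) leaves the average unchanged, so
\[
\wcart{p}(T)^p\ge r^p\,\frac1{2\pi}\int_0^{2\pi}|\cos\theta|^p\,d\theta=c_p^p\,|\ip{Tx}{x}|^p .
\]
Taking the supremum over unit vectors \(x\) gives \(c_p\,\omega(T)\le\wcart{p}(T)\).

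For sharpness, case (a) is handled as follows. If \(T=T^*\ne O\), then \(K=O\) and \(\|A_\theta\|=|\cos\theta|\,\|T\|\). Hence \(\wcart{p}(T)=c_p\|T\|=c_p\,\omega(T)\), exactly as in Proposition~\ref{prop:selfadjoint}.

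For case (b), the computation in the proof of Proposition~\ref{prop:equalitycase} shows that
\[
A_\theta^2=\cos^2\theta\,H^2+\sin^2\theta\,K^2+\sin\theta\cos\theta\,(HK+KH)=I
\]
whenever \(H^2=K^2=I\) and \(HK+KH=O\). Since \(A_\theta\) is self-adjoint, \(\|A_\theta\|=\|A_\theta^2\|^{1/2}=1\) for all \(\theta\). Therefore \(\wcart{p}(T)=1=\omega(T)\). The \(2\times2\) example in Theorem~\ref{thm:comparison} shows that such operators exist, so both constants are attained.

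The only step needing any care is the rotation-invariance identity
\[
\frac1{2\pi}\int_0^{2\pi}|\cos(\theta-\phi)|^p\,d\theta=c_p^p .
\]
It is routine by periodicity, and it replaces the explicit \(L^2\) computation \(\frac12(a^2+b^2)\) used when \(p=2\). One should also note that the case \(r=0\) is trivial, and that \(c_p\) is finite and positive, so nothing else is an obstacle.
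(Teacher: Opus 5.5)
Your proposal is correct and follows the paper's proof essentially step for step. It uses the averaged pointwise bound $\|A_\theta\|\le\omega(T)$ for the upper estimate, and the rewriting $|a\cos\theta+b\sin\theta|=r|\cos(\theta-\varphi)|$ with a periodic change of variable for the lower estimate; for the two sharpness claims it uses the self-adjoint computation and the identity $A_\theta^2=I$, which gives $\|A_\theta\|=1$ and spells out slightly more explicitly what the paper states briefly for case (b).
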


\begin{proof}
The upper bound is proved exactly as in Theorem~\ref{thm:comparison}, with \(L^2\) replaced by \(L^p\). For the lower bound, fix a unit vector \(x\in\mathcal H\) and set
\[
a=\langle Hx,x\rangle\in\mathbb R,\qquad b=\langle Kx,x\rangle\in\mathbb R.
\]
As in the proof of Theorem~\ref{thm:comparison}, we have
\[
\|A_\theta\|\ge|a\cos\theta+b\sin\theta|.
\]
If \(r=\sqrt{a^2+b^2}\neq0\), choose \(\varphi\in\mathbb R\) with \((a,b)=r(\cos\varphi,\sin\varphi)\). Then the same change of variable as in Theorem~\ref{thm:comparison} gives
\[
\wcart{p}(T)^p
\ge\frac{1}{2\pi}\int_0^{2\pi}r^p|\cos(\theta-\varphi)|^p\,d\theta
=r^p c_p^p.
\]
Thus, \(\wcart{p}(T)\ge c_p|\langle Tx,x\rangle|\), and taking the supremum over all unit vectors \(x\) yields the lower bound. The sharpness assertions follow as in Theorem~\ref{thm:comparison}: part (a) is the self-adjoint computation with \(|\cos\theta|^p\) replacing \(\cos^2\theta\), and part (b) uses the same pair \(H,K\) for which \(\|A_\theta\|=1\) for all \(\theta\).
\end{proof}

\begin{proposition}\label{prop:cp}
For \(1\le p<\infty\),
\begin{equation}\label{eq:cp-closed}
c_p=\left(\frac{\Gamma\!\left(\frac{p+1}{2}\right)}{\sqrt{\pi}\,\Gamma\!\left(\frac{p+2}{2}\right)}\right)^{1/p},
\end{equation}
where \(\Gamma\) is the gamma function. The map \(p\mapsto c_p\) is continuous
and strictly increasing on \([1,\infty)\), with
\[
c_1=\frac{2}{\pi}\approx 0.6366,\qquad
c_2=\frac{1}{\sqrt 2}\approx 0.7071,\qquad
\lim_{p\to\infty}c_p=1.
\]
In particular, \(c_p<1\) for every \(p<\infty\).
\end{proposition}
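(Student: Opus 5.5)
The plan has four steps: a Beta-integral computation of \(c_p^p\), special values, the limit via \(L^p\) norms, and strict monotonicity from Hölder/Jensen.

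\textbf{Closed form.} By symmetry of \(|\cos\theta|^p\) under \(\theta\mapsto\pi-\theta\) and \(\theta\mapsto-\theta\),
\[
c_p^p=\frac{2}{\pi}\int_0^{\pi/2}\cos^p\theta\,d\theta .
\]
The classical Wallis--Beta identity gives
\[
\int_0^{\pi/2}\cos^p\theta\,d\theta=\frac12 B\!\left(\tfrac{p+1}{2},\tfrac12\right)=\frac{\sqrt\pi\,\Gamma\!\left(\frac{p+1}{2}\right)}{2\,\Gamma\!\left(\frac{p+2}{2}\right)} .
\]
Substituting yields \eqref{eq:cp-closed}.

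\textbf{Special values.} At \(p=1\) the formula gives \(\Gamma(1)/(\sqrt\pi\,\Gamma(3/2))=2/\pi\). At \(p=2\) it gives \(\Gamma(3/2)/(\sqrt\pi\,\Gamma(2))=1/2\), so \(c_2=1/\sqrt2\). Continuity in \(p\) follows from continuity of \(\Gamma\) on \((0,\infty)\), or directly from dominated convergence applied to \(\int|\cos\theta|^p\,d\theta\).

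\textbf{Limit.} Since \(c_p=\||\cos|\|_{L^p(d\theta/2\pi)}\) on a probability space, the standard fact \(\|f\|_{L^p}\to\|f\|_{L^\infty}\) gives \(c_p\to\||\cos|\|_\infty=1\). To be self-contained I would prove it briefly. The upper bound \(c_p\le 1\) is immediate. For the lower bound, given \(\varepsilon>0\), the set \(E_\varepsilon=\{\theta:|\cos\theta|\ge1-\varepsilon\}\) has positive normalized measure \(m_\varepsilon\), hence
\[
c_p\ge(1-\varepsilon)m_\varepsilon^{1/p}\to1-\varepsilon .
\]

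\textbf{Strict monotonicity.} This is the step needing care. For \(1\le p<q\), Jensen/Hölder on the probability space gives \(\|f\|_p\le\|f\|_q\) with \(f=|\cos\theta|\). Equality in Jensen applied to the strictly convex function \(t\mapsto t^{q/p}\) and the variable \(f^p\) forces \(f^p\) to be a.e.\ constant. Since \(|\cos\theta|\) is not a.e.\ constant, the inequality is strict, so \(c_p<c_q\). Combining strict increase with the limit gives \(c_p<\lim_{q\to\infty}c_q=1\) for every finite \(p\). Alternatively, one sees directly that \(|\cos\theta|^p<1\) on a set of positive measure.
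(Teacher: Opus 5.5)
Your proposal is correct and follows the paper's proof essentially step by step. It uses the same symmetry reduction to $\frac{2}{\pi}\int_0^{\pi/2}\cos^p\theta\,d\theta$ with the Beta-integral evaluation, the same Lyapunov/Jensen comparison of $L^p$ norms of $|\cos\theta|$ on the normalized circle for monotonicity, and the same $\|f\|_{L^p}\to\|f\|_{L^\infty}$ limit. Your strictness argument via the equality case of Jensen for the strictly convex map $t\mapsto t^{q/p}$, and your short self-contained proof of the limit, only make explicit what the paper states in one line.
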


\begin{proof}
Using the symmetry of \(|\cos\theta|\) on \([0,2\pi]\),
\[
\frac{1}{2\pi}\int_0^{2\pi}|\cos\theta|^p\,d\theta
=\frac{2}{\pi}\int_0^{\pi/2}\cos^p\theta\,d\theta.
\]
The beta integral gives
\[
\int_0^{\pi/2}\cos^p\theta\,d\theta
=\frac{\sqrt\pi}{2}\cdot\frac{\Gamma\!\left(\frac{p+1}{2}\right)}{\Gamma\!\left(\frac{p+2}{2}\right)},
\]
hence,
\[
\frac{1}{2\pi}\int_0^{2\pi}|\cos\theta|^p\,d\theta
=\frac{1}{\sqrt\pi}\cdot\frac{\Gamma\!\left(\frac{p+1}{2}\right)}{\Gamma\!\left(\frac{p+2}{2}\right)},
\]
which yields \eqref{eq:cp-closed}.

For monotonicity, view \(f(\theta)=|\cos\theta|\) as a function on the
probability space \(([0,2\pi],\mathcal B,d\theta/(2\pi))\). Then
\(c_p=\|f\|_{L^p}\). By Lyapunov's inequality,
\(\|f\|_{L^q}\le\|f\|_{L^p}\) for \(1\le q<p<\infty\), hence, \(c_q\le c_p\).
Since \(f\) is nonconstant, the inequality is strict, so \(p\mapsto c_p\) is
strictly increasing. Continuity is immediate from \eqref{eq:cp-closed}.

The values \(c_1=2/\pi\) and \(c_2=1/\sqrt2\) follow from \eqref{eq:cp-closed}
together with \(\Gamma(1)=1\), \(\Gamma(3/2)=\sqrt\pi/2\), \(\Gamma(2)=1\).
Finally, \(\|f\|_{L^p}\to\|f\|_{L^\infty}=\operatorname{ess\,sup}|\cos\theta|=1\)
as \(p\to\infty\), hence, \(c_p\to 1\).
\end{proof}

\begin{corollary}\label{cor:interpolation}
Let \(T\in\mathcal B(\mathcal H)\) and \(1\le q\le p<\infty\). Then
\[
\wcart{q}(T)\le\wcart{p}(T),
\]
and
\[
\lim_{p\to\infty}\wcart{p}(T)=\omega(T).
\]
\end{corollary}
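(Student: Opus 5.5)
Both assertions follow from viewing \(\wcart{p}(T)\) as an \(L^p\)-norm on a probability space. Put \(f(\theta)=\|A_\theta\|\) on \([0,2\pi]\) with the normalized measure \(\mu=d\theta/(2\pi)\). Then \(\wcart{p}(T)=\|f\|_{L^p(\mu)}\) for \(1\le p<\infty\). By \eqref{eq:cart}, \(\omega(T)=\sup_\theta f(\theta)\).

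For the monotonicity \(\wcart{q}(T)\le\wcart{p}(T)\) with \(q\le p\), I will use Lyapunov's (Jensen's/Hölder's) inequality, exactly as in the proof of Proposition~\ref{prop:cp}. Since \(\mu\) is a probability measure, Hölder with exponents \(p/q\) and its conjugate gives
\[
\int f^q\,d\mu\le\Big(\int f^p\,d\mu\Big)^{q/p}.
\]
Taking \(q\)-th roots yields the claim. The case \(q=p\) is trivial.

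For the limit, I will show that \(\|f\|_{L^p(\mu)}\to\|f\|_{L^\infty(\mu)}\) as \(p\to\infty\), and then that \(\|f\|_{L^\infty(\mu)}=\sup_\theta f(\theta)=\omega(T)\).

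\emph{Upper bound.} Theorem~\ref{thm:Lp-comparison} gives \(\wcart{p}(T)\le\omega(T)\) for every \(p\), so \(\limsup_{p\to\infty}\wcart{p}(T)\le\omega(T)\). This handles \(T=O\) as well.

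\emph{Lower bound.} There are two routes.

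The first is the quick one: combine Theorem~\ref{thm:Lp-comparison} with Proposition~\ref{prop:cp}. This gives \(c_p\,\omega(T)\le\wcart{p}(T)\), and \(c_p\to1\), so \(\liminf_{p\to\infty}\wcart{p}(T)\ge\omega(T)\). Together with the upper bound, the limit exists and equals \(\omega(T)\). The monotonicity from the first part is not even needed for existence of the limit.

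The second route is direct, as a backup. It uses continuity of \(f\), which holds because \(f\) is 1-Lipschitz in the pair \((\cos\theta,\sin\theta)\) up to the constant \(\|H\|+\|K\|\). Given \(\varepsilon>0\), pick \(\theta_0\) with \(f(\theta_0)>\omega(T)-\varepsilon\). By continuity, \(f>\omega(T)-\varepsilon\) on an interval of some length \(\delta>0\). Hence
\[
\wcart{p}(T)\ge(\delta/2\pi)^{1/p}(\omega(T)-\varepsilon),
\]
which tends to \(\omega(T)-\varepsilon\) as \(p\to\infty\).

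There is no real obstacle here. The only point needing care is identifying the essential supremum of \(f\) with its pointwise supremum. This is where continuity of \(f\), or the shortcut via \(c_p\to1\), is used, and the first route bypasses it entirely.
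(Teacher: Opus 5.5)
Your proof is correct. The monotonicity part is the same as the paper's: Lyapunov/H\"older on the probability space $([0,2\pi],d\theta/2\pi)$.

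For the limit your primary route differs. The paper invokes the general fact $\|g\|_{L^p}\to\|g\|_{L^\infty}$ for $g(\theta)=\|A_\theta\|$ and then uses \eqref{eq:cart}. That step tacitly needs continuity of $g$ to identify its essential supremum with $\sup_\theta\|A_\theta\|$. You instead squeeze $\wcart{p}(T)$ between $c_p\,\omega(T)$ and $\omega(T)$, using Theorem~\ref{thm:Lp-comparison} together with $c_p\to1$ from Proposition~\ref{prop:cp}. This is non-circular, since neither result depends on the corollary.

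The squeeze also gives something the paper's argument does not directly give. It yields the quantitative estimate $0\le\omega(T)-\wcart{p}(T)\le(1-c_p)\,\omega(T)$, so $\wcart{p}\to\omega$ uniformly on bounded subsets of $\mathcal B(\mathcal H)$, not merely for each fixed $T$.

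Your route does not avoid the $L^p\to L^\infty$ limit altogether, though. The paper proves $c_p\to1$ by that very limit applied to $|\cos\theta|$; alternatively it follows from the Gamma formula \eqref{eq:cp-closed}. The squeeze only moves the limit to an explicit function, where the essential-supremum issue is trivial.

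Your backup route is essentially a hands-on proof of the standard fact the paper cites, with the continuity of $\theta\mapsto\|A_\theta\|$ made explicit. In that sense it coincides with the paper's argument.
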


\begin{proof}
The map \(\theta\mapsto\|A_\theta\|\) is continuous on \([0,2\pi]\), hence,
belongs to \(L^\infty\). On the probability space
\(([0,2\pi],d\theta/(2\pi))\), Lyapunov's inequality gives
\(\wcart{q}(T)\le\wcart{p}(T)\) for \(q\le p\). The limit is the standard
identity \(\lim_{p\to\infty}\|g\|_{L^p}=\|g\|_{L^\infty}\) for
\(g\in L^\infty\), combined with \eqref{eq:cart}.
\end{proof}

\begin{remark}\label{rem:Lp-vs-pnr}
The family \(\{\wcart{p}\}_{1\le p<\infty}\) (together with its limit
\(\wcart{\infty}:=\omega\)) should be distinguished from the
\(p\)-numerical radius studied in
\cite{BenmakhloufFrakisKittanehMennouni2025, BenmakhloufHirzallahKittaneh2021, BhuniaSahoo2025, FrakisKittaneh2025, FrakisKittanehSoltani2024a, FrakisKittanehSoltani2024b, FrakisKittanehSoltani2025, Sahoo2026},
which is defined via the numerical range rather than the Cartesian
decomposition. The quantity \(\wcart{p}(T)\) here involves averaging the
operator norm \(\|A_\theta\|\) over the unit circle, and at \(p=2\) reduces
exactly to the \(L^2\)-Cartesian numerical radius \(\wcart{2}\) of Section~2.
\end{remark}

\begin{remark}\label{rem:Lp-consequences}
Several results of Section~2 are the case \(p=2\) of the results above.
Specifically:
\begin{itemize}
\item Theorem~\ref{thm:comparison} is Theorem~\ref{thm:Lp-comparison} with
\(p=2\) and \(c_2=1/\sqrt2\);
\item Proposition~\ref{prop:selfadjoint} is the \(p=2\) instance of part~(a)
of Theorem~\ref{thm:Lp-comparison};
\item Proposition~\ref{prop:equalitycase} is a more general version of the
\(p=2\) instance of part~(b) of Theorem~\ref{thm:Lp-comparison}: it assumes
only \(H^2=K^2\) and \(HK+KH=O\), without the normalization \(H^2=K^2=I\).
\end{itemize}
Thus, the family \(\{\wcart{p}\}_{1\le p<\infty}\) provides a unified framework
that contains the main comparison theorem of the paper as its most basic
member.
\end{remark}

\subsection*{Declarations}
\begin{itemize}
\item \textbf{Data availability}: Not applicable.

\item \textbf{Declaration of competing interests}: The authors declare that they have no competing interests.

\item \textbf{Funding}: Not applicable.

\item \textbf{Authors' contributions}: The authors contributed equally to this paper. All authors have read and approved this version.
\end{itemize}

\end{document}